\documentclass[12pt,reqno]{amsart}
\usepackage{amsmath,amssymb,extarrows}
\usepackage{url}
\usepackage{tikz,enumerate}
\usepackage{diagbox}
\usepackage{appendix}
\usepackage{epic}
\usepackage{comment}

\newcommand{\padedvphantom}[3]{%
	\vtop{%
		\vbox{%
			\vspace*{#2}%
			\hbox{\vphantom{#1}}%
		}%
		\vspace*{#3}%
	}%
}

\usepackage{float} 

\usepackage{cite}

\usepackage{hyperref}
\usepackage{array}

\usepackage{booktabs}

\allowdisplaybreaks[4]

\usepackage{makecell} 
\usepackage{float} 
\usepackage{siunitx} 
\usepackage{multirow}
\usepackage[figuresright]{rotating}
\usepackage{stackengine}
\newcommand\xrowht[2][0]{\addstackgap[.5\dimexpr#2\relax]{\vphantom{#1}}}

\newtheorem{theorem}{Theorem}

\newtheorem{lemma}[theorem]{Lemma}

\theoremstyle{definition}

\theoremstyle{remark}

\numberwithin{equation}{section}
\numberwithin{theorem}{section}
\numberwithin{defn}{section}

\DeclareMathOperator{\CT}{CT}

\begin{document}
\title[Some new modular Nahm sums of ranks 3 and 4]
 {Some new modular Nahm sums of ranks 3 and 4}

\author{Zhineng Cao and Liuquan Wang}

\address[Z.\ Cao]{School of Mathematics and Statistics, Wuhan University, Wuhan 430072, Hubei, People's Republic of China}
\email{zhncao@whu.edu.cn}

\address[L.\ Wang]{School of Mathematics and Statistics, Wuhan University, Wuhan 430072, Hubei, People's Republic of China}

\email{wanglq@whu.edu.cn;mathlqwang@163.com}

\subjclass[2020]{05A30, 11P84, 33D15, 33D60, 11F03}

\keywords{Nahm sums; Rogers--Ramanujan type identities; Bailey pairs; modular triples; Nahm's problem}


\begin{abstract}
We discover six new families of modular Nahm sums in ranks 3 and 4. Two of them are rank three sums obtained by modifying two of Zagier's rank three examples. Three rank four families are derived by applying the lift‑dual operation to the rank three tadpole Nahm sums studied by Milas and Wang, while the other rank four family is found by the constant term method. To prove modularity, we establish Rogers--Ramanujan type identities that express these Nahm sums as  infinite products which are modular.
\end{abstract}

\maketitle

\section{Introduction and main results}\label{sec-intro}
For any positive integer $r$,  positive definite matrix $A\in \mathbb{Q}^{r\times r}$, vector $B\in \mathbb{Q}^r$ and scalar $C\in \mathbb{Q}$,  the rank $r$ Nahm sum
\begin{align}\label{eq-Nahm}
    f_{A,B,C}(q):=\sum_{n=(n_1,n_2,\dots,n_r)^\mathrm{T}\in \mathbb{Z}_{\geq 0}^r}\frac{q^{\frac{1}{2}n^\mathrm{T}An+B^\mathrm{T}n+C}}{(q;q)_{n_1}(q;q)_{n_2}\cdots (q;q)_{n_r}}
\end{align}
has drawn much attention in the theory of $q$-series, modular forms and rational conformal field theories. Here and throughout this paper we assume $|q|<1$ and use standard $q$-series notation:
\begin{align}
(a;q)_0:=1, \quad (a;q)_n:=\prod\limits_{k=0}^{n-1}(1-aq^k), \quad (a;q)_\infty :=\prod\limits_{k=0}^\infty (1-aq^k),  \\
(a_1,\dots,a_m;q)_n:=(a_1;q)_n\cdots (a_m;q)_n, \quad n\in \mathbb{N}\cup \{\infty\}.
\end{align}
As a key topic on Nahm sums, Nahm's problem asks for finding all triples $(A,B,C)$ such that $f_{A,B,C}(q)$ is modular, and such $(A,B,C)$ is referred to as a rank $r$ modular triple. Modular Nahm sums are usually characters of some 2-dimensional conformal field theories, and they also played important roles in the representation theory of Lie algebras.

Nahm's problem has been solved by Zagier in the rank one case, who proved that there are exactly seven rank one modular triples. However, when the rank $r\geq 2$, the problem is much more complicated and a complete classification of modular Nahm sums seems far from reach. Several works have been devoted to finding new modular triples. In particular, Zagier \cite{Zagier} listed 11 and 12 sets of modular triple candidates in the ranks 2 and 3 cases, respectively. The modularity of all of Zagier's candidates has been confirmed by the works of Zagier \cite{Zagier}, Vlasenko--Zwegers \cite{Vlasenko-Zwegers}, Cherednik--Feigin \cite{Feigin}, Wang \cite{Wang-rank2,Wang-rank3} and Cao--Rosengren--Wang \cite{CRW}.

Besides providing explicit modular triples, Zagier \cite[p.~50, (f)]{Zagier} also pointed out that there might exist some duality principles among modular Nahm sums. He suggests that for any rank $r$ modular triple $(A,B,C)$, its dual
$$\mathcal{D}(A,B,C)=(A^\star,B^\star,C^\star):=(A^{-1},A^{-1}B,\frac{1}{2}B^\mathrm{T}A^{-1}B-\frac{r}{24}-C)$$
might also be modular. This principle holds for many  known modular triples. However, Wang \cite{Wang-PAMS} recently found two modular rank four Nahm sums whose dual are not modular, and thereby providing counterexamples to this duality principle.

Apart from Zagier's examples in \cite{Zagier}, there are some other Nahm sums for which modularity remains an open problem. Calinescu, Milas and Penn \cite{CMP} conjectured that the rank $r$ tadpole Nahm sum associated with $(T_r,0,C)$ is modular for the tadpole Cartan matrix $T_r=(a_{ij})_{r\times r}$ and some suitable scalar $C$. Here $a_{rr}=1$, $a_{ii}=2$ for $1\leq i \leq r-1$, $a_{ij}=-1$ when $|i-j|=1$ and $a_{ij}=0$ otherwise. They proved this conjecture for $r=2$. The case $r=3$ was confirmed by Milas and Wang \cite{MW24}. In fact, they proved the modularity of the following triples $(A,B_i,C_i)$ ($i=1,2,\dots,6$)  (see \cite[Theorem 1.2]{MW24}):
\begin{align}\label{MW-mt}
\begin{split}
&A=T_3=\begin{pmatrix} 2 & -1 & 0 \\ -1 & 2 & -1 \\ 0 & -1 & 1 \end{pmatrix},~~ B_1=\begin{pmatrix} 0\\0\\0 \end{pmatrix},~~ B_2=\begin{pmatrix} 0\\0\\1/2 \end{pmatrix}, ~~ B_3=\begin{pmatrix} -1\\1\\0 \end{pmatrix}, \\
& B_4=\begin{pmatrix} 1\\-1\\1/2 \end{pmatrix}, ~~ B_5=\begin{pmatrix} -1\\1\\1/2 \end{pmatrix}, ~~ B_6=\begin{pmatrix} -2\\2\\-1/2 \end{pmatrix}, ~~ C_1=-\frac{7}{80}, ~~ C_2=\frac{1}{40},\\
& C_3=\frac{17}{80}, ~~ C_4=\frac{9}{40}, ~~ C_5=\frac{9}{40}, ~~ C_6=\frac{41}{40}.
\end{split}
\end{align}
The modularity of the tadpole Nahm sums in ranks $4$ and $5$ was recently proved by Shi and Wang \cite{Shi-Wang}.

It is known that one may construct higher rank Nahm sums from lower rank sums. One useful way to increase the rank of Nahm sums is the following formula (see e.g. \cite[p.\ 20]{Andrews1984}):
\begin{align}\label{Andrews-id}
\frac{1}{(q;q)_i(q;q)_j}=\sum_{k\geq 0} \frac{q^{(i-k)(j-k)}}{(q;q)_k(q;q)_{i-k}(q;q)_{j-k}}.
\end{align}
If we pick two variables $n_i$ and $n_j$ ($1\leq i<j\leq n$) and applying the above formula with $(i,j)$ replaced by $(n_i,n_j)$, then we can rewrite \eqref{eq-Nahm} as
\begin{align}\label{eq-Nahm-lift}
    f_{A,B,C}(q)=\sum_{(n_1,n_2,\dots,n_r,n_{r+1})^\mathrm{T}\in \mathbb{Z}_{\geq 0}^{r+1}}\frac{q^{\frac{1}{2}\widetilde{n}^\mathrm{T}A\widetilde{n}+B^\mathrm{T}\widetilde{n}+n_in_j+C}}{(q;q)_{n_1}(q;q)_{n_2}\cdots (q;q)_{n_r}(q;q)_{n_{r+1}}}
\end{align}
where
$$\widetilde{n}=(n_1,n_2,\dots,n_{i-1},n_i+n_{r+1},n_{i+1},\dots,n_{j-1},n_{j}+n_{r+1},n_{j+1},\dots,n_r,n_{r+1})^\mathrm{T}.$$
Motivated by this fact and Zagier's duality principle, the authors \cite{Cao-Wang2024,Cao-Wang2025} recently introduced the lift-dual operation to construct new modular triples from known examples. As a consequence, we \cite{Cao-Wang2024,Cao-Wang2025} found some new modular triples of ranks 3 and 4. Here we briefly review the lift-dual operation used in  \cite{Cao-Wang2025}. For any rank three modular triple $(A,B,C)$ with
\begin{align*}
&A=\begin{pmatrix} a_1 & a_2 & a_3 \\ a_2 & a_4 & a_5 \\ a_3 & a_5 & a_6 \end{pmatrix}, \quad B=\begin{pmatrix} b_1 \\ b_2 \\ b_3 \end{pmatrix} \quad \text{and} \quad C\in \mathbb{Q},
\end{align*}
we define three lifting operators as follows (see  \cite[(1.13)--(1.16)]{Cao-Wang2025}):
\begin{align}\label{eq-lift}
    \mathcal{L}_{i}: (A,B,C) \longmapsto (\widetilde{A}_i,\widetilde{B}_i,C), \quad i=1,2,3,
\end{align}
where
\begin{align}
&\widetilde{A}_1=\begin{pmatrix} a_1 & a_2 & a_3 & a_2+a_3 \\ a_2 & a_4 & a_5+1 & a_4+a_5 \\ a_3 & a_5+1 & a_6 & a_5+a_6 \\ a_2+a_3 & a_4+a_5 & a_5+a_6 & a_4+2a_5+a_6 \end{pmatrix}, \quad \widetilde{B}_1=\begin{pmatrix} b_1 \\ b_2 \\ b_3 \\ b_2+b_3\end{pmatrix}, \label{3-jk} \\
&\widetilde{A}_2=\begin{pmatrix} a_1 & a_2 & a_3+1 & a_1+a_3 \\ a_2 & a_4 & a_5 & a_2+a_5 \\ a_3+1 & a_5 & a_6 & a_3+a_6 \\ a_1+a_3 & a_2+a_5 & a_3+a_6 & a_1+2a_3+a_6 \end{pmatrix}, \quad \widetilde{B}_2=\begin{pmatrix} b_1 \\ b_2 \\ b_3 \\ b_1+b_3\end{pmatrix}, \label{3-ik} \\
&\widetilde{A}_3=\begin{pmatrix} a_1 & a_2+1 & a_3 & a_1+a_2 \\ a_2+1 & a_4 & a_5 & a_2+a_4 \\ a_3 & a_5 & a_6 & a_3+a_5 \\ a_1+a_2 & a_2+a_4 & a_3+a_5 & a_1+2a_2+a_4 \end{pmatrix}, \quad \widetilde{B}_3=\begin{pmatrix} b_1 \\ b_2 \\ b_3 \\ b_1+b_2\end{pmatrix}. \label{3-ij}
\end{align}
We use $\mathcal{L}_i(A)$ and $\mathcal{L}_i(B)$ to denote $\widetilde{A}_i$ and $\widetilde{B}_i$ in \eqref{eq-lift} and agree that $\mathcal{L}_i(C)=C$.

A key fact in the lifting process is the following equality:
\begin{align}
f_{A,B,C}(q)=f_{\widetilde{A}_i,\widetilde{B}_i,C}(q).
\end{align}
This is the $r=3$ case  of \eqref{eq-Nahm-lift}. As a consequence, if $(A,B,C)$ is a modular triple and $\widetilde{A}_i$ is positive definite, then $(\widetilde{A}_i,\widetilde{B}_i,C)$ is a modular triple as well.  Zagier's duality principle then inspires us to consider the modularity of the lift-dual triples $\mathcal{D}\mathcal{L}_i(A,B,C)$ ($i=1,2,3$). The authors \cite{Cao-Wang2025} applied such lift-dual operations to those rank three modular triples in \cite{Zagier,Cao-Wang2024} and obtained several new rank four modular triples.

This paper is a sequel to \cite{Cao-Wang2024,Cao-Wang2025}, and we aim to provide more modular Nahm sums in ranks 3 and 4. We discover such new examples in two different ways. The first way is proof-motivated and we obtain new modular triples when trying to simplify some series.  We discovered three new sets of  modular triples in this way. First, after a close examination of the first three examples in Zagier's rank three list \cite[Table 3]{Zagier}, we find some modular triples missing from it. We list the first and third examples in Zagier's rank three list  in Table \ref{tab-exam13}. Here $\alpha,\nu\in \mathbb{Q}$ and $h\in \mathbb{Z}$ can only take the values $0$ or $\pm 1$ to make $A$ positive definite. Inspired by these modular triples and the proof of their modularity in \cite{Zagier,Wang-rank3}, we find some new modular triples for matrices of the same form but with $h=\pm \frac{1}{2}$. We record these modular triples in Table \ref{tab-rank3-new}.
{\small
\begin{table}[htbp]
\centering
\caption{Zagier's Examples 1 and 3}
    \label{tab-exam13}
\begin{tabular}{|c|ccc|cc|}
\hline 
\padedvphantom{I}{3.5ex}{3.5ex}
    $A$   & \multicolumn{3}{c|}{$\left(\begin{smallmatrix}  \alpha h^2+1 & \alpha h & -\alpha h \\ \alpha h & \alpha & 1-\alpha \\ -\alpha h & 1-\alpha & \alpha \end{smallmatrix}\right)$ ($h=0,\pm1$)}  
    & \multicolumn{2}{c|}{$\left(\begin{smallmatrix}  \alpha h^2+1/2 & \alpha h & -\alpha h \\ \alpha h & \alpha & 1-\alpha \\ -\alpha h & 1-\alpha & \alpha \end{smallmatrix}\right)$ ($h=0$)}  \\
\hline 
\padedvphantom{I}{3.5ex}{3.5ex}
   $B$
     & $\left(\begin{smallmatrix}\alpha \nu h \\ \alpha \nu \\ -\alpha \nu \end{smallmatrix}\right)$  & $\left(\begin{smallmatrix} \alpha \nu h +{1}/{2} \\ \alpha \nu  \\ -\alpha \nu \end{smallmatrix}\right)$ & $\left(\begin{smallmatrix}
\alpha \nu h-1/2 \\ \alpha \nu \\ -\alpha \nu \end{smallmatrix}\right)$
     & $\left(\begin{smallmatrix} \alpha \nu h \\  \alpha \nu \\ -\alpha \nu  \end{smallmatrix}\right)$ & $\left(\begin{smallmatrix}  \alpha \nu h+1/2 \\  \alpha \nu  \\ -\alpha \nu \end{smallmatrix}\right)$ \\
     \padedvphantom{I}{1ex}{1ex}
   $C$
    & {\tiny $\frac{1}{2}\alpha \nu^2-\frac{1}{16}$} & {\tiny $\frac{1}{2}\alpha \nu^2$} &{\tiny $\frac{1}{2}\alpha\nu^2$} 
     & {\tiny $\frac{1}{2}\alpha {\nu^2}- \frac{1}{15}$} & {\tiny $ \frac{1}{2}\alpha \nu^2-\frac{1}{60}$}    \\
    \hline
\end{tabular}
\end{table}
}

{\small
\begin{table}[htbp]
\centering
\caption{Some new rank three modular triples}
    \label{tab-rank3-new}
\begin{tabular}{|c|cc|cc|}
\hline 
\padedvphantom{I}{3.5ex}{3.5ex}
    $A$   & \multicolumn{2}{c|}{$\left(\begin{smallmatrix}    \alpha h^2+1 & \alpha h & -\alpha h \\  \alpha h & \alpha & 1-\alpha \\  -\alpha h & 1-\alpha & \alpha \end{smallmatrix}\right)$ ($h=\pm \frac{1}{2}$)}  
    & \multicolumn{2}{c|}{$\left(\begin{smallmatrix}  \alpha h^2+1/2 & \alpha h & -\alpha h \\ \alpha h & \alpha & 1-\alpha \\ -\alpha h & 1-\alpha & \alpha \end{smallmatrix}\right)$ ($h=\pm \frac{1}{2}$)}  \\
\hline 
\padedvphantom{I}{3.5ex}{3.5ex}
   $B$
     & $\left(\begin{smallmatrix} \alpha \nu h \\ \alpha \nu \\ -\alpha \nu \end{smallmatrix}\right)$  & $\left(\begin{smallmatrix} \alpha \nu h-1/2 \\ \alpha \nu \\ -\alpha \nu \end{smallmatrix}\right)$ 
     & $\left(\begin{smallmatrix} \alpha \nu h \\  \alpha \nu \\ -\alpha \nu  \end{smallmatrix}\right)$ & $\left(\begin{smallmatrix}  \alpha \nu h+1/2 \\  \alpha \nu  \\ -\alpha \nu \end{smallmatrix}\right)$ \\
     \padedvphantom{I}{1ex}{1ex}
   $C$
    & {\tiny $\frac{1}{2}\alpha \nu^2-\frac{1}{16}$} & {\tiny $\frac{1}{2}\alpha \nu^2$} 
     & {\tiny $\frac{1}{2}\alpha {\nu^2}- \frac{1}{15}$} & {\tiny $ \frac{1}{2}\alpha \nu^2-\frac{1}{60}$}    \\
    \hline
\end{tabular}
\end{table}
}

We establish the following two theorems to justify the modularity of these new triples. To state our results in a more compact way, we denote
\begin{align*}
J_m=(q^{m};q^{m})_{\infty}, \quad 
J_{a,m}=(q^{a},q^{m-a},q^{m};q^{m})_{\infty},  \quad \overline{J}_{a,m}=(-q^{a},-q^{m-a},q^{m};q^{m})_{\infty}.
\end{align*}
Here $m$ is a positive integer and $0\leq a \leq m$. It is known that both $q^{m/24}J_{m}$ and $q^{m/24+mP_{2}(a/m)/2}J_{a,m}$ are modular forms of weight 1/2 where $P_{2}(x)=x^2-x+1/6$.
\begin{theorem}\label{thm-rank3}
Suppose that $h=\pm \frac{1}{2}$, $\alpha,\nu \in \mathbb{Q}$ and $\alpha>\frac{4}{7}$. We have
\begin{align}
&\sum_{i,j,k\geq 0} \frac{q^{\frac{1}{2}(\alpha h^2+1)i^2+\frac{1}{2}\alpha j^2+\frac{1}{2}\alpha k^2+\alpha h ij-\alpha h ik+(1-\alpha)jk+(\alpha \nu h-\frac{1}{2})i+\alpha \nu j-\alpha \nu k+\frac{1}{2}\alpha\nu^2}}{(q;q)_i(q;q)_j(q;q)_k}\nonumber \\
&=\frac{J_2}{J_1^2}\sum_{n \in \mathbb{Z}} q^{\frac{\alpha}{8}(n+2\nu)^2}, \label{exam1-1} \\
&\sum_{i,j,k\geq 0} \frac{q^{\frac{1}{2}(\alpha h^2+1)i^2+\frac{1}{2}\alpha j^2+\frac{1}{2}\alpha k^2+\alpha h ij-\alpha h ik+(1-\alpha)jk+\alpha \nu hi+\alpha \nu j-\alpha \nu k+\frac{1}{2}\alpha\nu^2}}{(q;q)_i(q;q)_j(q;q)_k}\nonumber \\
&=  \frac{J_{1,8}J_{6,16}}{J_1^2J_{16}}\sum_{n\in \mathbb{Z}+\nu} q^{\frac{\alpha}{2}n^2}+q^{\frac{1}{2}}\frac{J_{3,8}J_{2,16}}{J_1^2J_{16}}\sum_{n\in \mathbb{Z}+\nu+\frac{1}{2}} q^{\frac{\alpha}{2}n^2}. \label{exam1-2}
\end{align}    
\end{theorem}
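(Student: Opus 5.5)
The plan is to complete the square in the exponent so that the sum over $(j,k)$ collapses. This reduces both identities to a rank one sum in $i$, dissected by the parity of $i$. Write $\varepsilon=\frac12$ for \eqref{exam1-1} and $\varepsilon=0$ for \eqref{exam1-2}. Using $(1-\alpha)jk=jk-\alpha jk$, the exponent on the left-hand sides equals
\begin{align*}
\frac{i^2}{2}-\varepsilon i+jk+\frac{\alpha}{2}\left(hi+j-k+\nu\right)^2 .
\end{align*}
The hypothesis $\alpha>\frac47$ only guarantees that $A$ is positive definite, so that the series converge; it plays no further role. I would set $m=j-k\in\mathbb{Z}$ and sum over all pairs $(j,k)$ with $j-k=m$ fixed. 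The classical identity
\begin{align*}
\sum_{k\geq 0}\frac{q^{k(k+|m|)}}{(q;q)_k(q;q)_{k+|m|}}=\frac{1}{(q;q)_\infty},
\end{align*}
a Durfee-rectangle / Cauchy identity, shows that the inner sum is independent of $m$. Hence the left-hand side becomes
\begin{align*}
\frac{1}{J_1}\sum_{i\geq 0}\frac{q^{i^2/2-\varepsilon i}}{(q;q)_i}\sum_{m\in\mathbb{Z}}q^{\frac{\alpha}{2}(m+hi+\nu)^2}.
\end{align*}

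Since $h=\pm\frac12$, the inner theta series depends only on the parity of $i$. For $i$ even it equals $\theta_0:=\sum_{n\in\mathbb{Z}+\nu}q^{\frac{\alpha}{2}n^2}$. For $i$ odd it equals $\theta_{1/2}:=\sum_{n\in\mathbb{Z}+\nu+\frac12}q^{\frac{\alpha}{2}n^2}$, because $\pm\frac12 i$ is congruent to $\frac12$ modulo $1$. The sign of $h$ is therefore irrelevant. Let $E_\varepsilon$ and $O_\varepsilon$ denote the parts of $\sum_i q^{i^2/2-\varepsilon i}/(q;q)_i$ coming from even and odd $i$ respectively. The left-hand side is then $J_1^{-1}(E_\varepsilon\theta_0+O_\varepsilon\theta_{1/2})$.

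For \eqref{exam1-1}, Euler's identity $\sum_i q^{\binom{i}{2}}x^i/(q;q)_i=(-x;q)_\infty$ evaluated at $x=\pm1$ gives
\begin{align*}
E+O=(-1;q)_\infty=2(-q;q)_\infty,\qquad E-O=(1;q)_\infty=0.
\end{align*}
Hence $E=O=(-q;q)_\infty=J_2/J_1$. Moreover $\theta_0+\theta_{1/2}=\sum_{n\in\mathbb{Z}}q^{\frac{\alpha}{2}(n/2+\nu)^2}=\sum_{n\in\mathbb{Z}} q^{\frac{\alpha}{8}(n+2\nu)^2}$, which gives \eqref{exam1-1} immediately.

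For \eqref{exam1-2}, Euler's identity with $x=\pm q^{1/2}$ gives
\begin{align*}
E_0=\frac12\left[(-q^{1/2};q)_\infty+(q^{1/2};q)_\infty\right],\qquad O_0=\frac12\left[(-q^{1/2};q)_\infty-(q^{1/2};q)_\infty\right].
\end{align*}
Equivalently, $E_0=\sum_n q^{2n^2}/(q;q)_{2n}$, which is a Slater-type sum. It remains to show that $E_0=J_{1,8}J_{6,16}/(J_1J_{16})$ and $O_0=q^{1/2}J_{3,8}J_{2,16}/(J_1J_{16})$. To do this I would multiply by $J_1$ and use the Jacobi triple product:
\begin{align*}
J_1(\mp q^{1/2};q)_\infty=\sum_{n\in\mathbb{Z}}(\pm1)^n q^{n^2/2}.
\end{align*}
Then $J_1E_0=\sum_{n\ \mathrm{even}}q^{n^2/2}=\sum_n q^{2n^2}$, and $J_1O_0=\sum_{n\ \mathrm{odd}}q^{n^2/2}$. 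These are single theta functions: $J_1E_0=\overline{J}_{2,4}$ and $J_1O_0=2q^{1/2}J_8^2/J_4$.

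The main obstacle is matching these with the stated products. That requires identities such as
\begin{align*}
\overline{J}_{2,4}=\frac{J_{1,8}J_{6,16}}{J_{16}}.
\end{align*}
I would check them with the quintuple product identity, or by verifying $(-q^2;q^4)_\infty^2(q^4;q^4)_\infty=(q,q^7,q^8;q^8)_\infty(q^6,q^{10};q^{16})_\infty$ through elementary product manipulations. I would handle the odd part similarly. Failing that, the standard fallback is to verify the equality via the theory of modular forms, by checking sufficiently many coefficients.
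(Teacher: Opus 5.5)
Your reduction to $J_1^{-1}(E_\varepsilon\theta_0+O_\varepsilon\theta_{1/2})$ is the same as the paper's: complete the square, apply the Durfee rectangle identity, then split by the parity of $i$. Your proof of \eqref{exam1-1} is correct. Evaluating Euler's identity at $x=\pm1$ gives an elementary proof of the Slater identities (S.~85) and (S.~9), which the paper simply quotes.

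The gap is in \eqref{exam1-2}. The Jacobi triple product at $z=\mp q^{1/2}$ reads $(\mp q^{1/2},\mp q^{1/2},q;q)_\infty=\sum_n(\pm1)^nq^{n^2/2}$. It therefore involves the square $J_1(\mp q^{1/2};q)_\infty^{2}$, not $J_1(\mp q^{1/2};q)_\infty$. Your formula already fails at the coefficient of $q^{1/2}$: for the upper sign it is $1$ on the left and $2$ on the right. Hence $J_1E_0\neq\overline{J}_{2,4}$ and $J_1O_0\neq 2q^{1/2}J_8^2/J_4$. The identity you set out to verify, $\overline{J}_{2,4}=J_{1,8}J_{6,16}/J_{16}$, is false. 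Its left side is $\sum_nq^{2n^2}=1+2q^2+\cdots$, while its right side is $(1-q)\cdots=1-q+\cdots$. Neither product manipulations nor a coefficient check can establish it.

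The step can be repaired. What is true is $J_1(\pm q^{1/2};q)_\infty=(\pm q^{1/2};\pm q^{1/2})_\infty$. By Euler's pentagonal number theorem, $J_1E_0$ and $-J_1O_0$ are the parts of $\sum_n(-1)^nq^{n(3n-1)/4}$ with integral and half-integral exponents, respectively. Splitting $n$ modulo $4$ gives
\begin{align*}
J_1E_0&=\sum_{m\in\mathbb{Z}}q^{12m^2-m}-q\sum_{m\in\mathbb{Z}}q^{12m^2-7m},\\
J_1O_0&=q^{1/2}\Big(\sum_{m\in\mathbb{Z}}q^{12m^2+5m}-q^2\sum_{m\in\mathbb{Z}}q^{12m^2+11m}\Big).
\end{align*}
Now apply the quintuple product identity $\sum_n s^{(3n^2+n)/2}(t^{3n}-t^{-3n-1})=(s,st,1/t;s)_\infty(st^2,s/t^2;s^2)_\infty$ with $s=q^8$. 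Taking $t=q^{-1}$, resp.\ $t=q^{-3}$, turns these into $J_{1,8}J_{6,16}/J_{16}$ and $q^{1/2}J_{3,8}J_{2,16}/J_{16}$, which is what you need.

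Alternatively, do as the paper does. Write $E_0=\sum_nq^{2n^2}/(q;q)_{2n}$ and $O_0=q^{1/2}\sum_nq^{2n^2+2n}/(q;q)_{2n+1}$, and quote Slater's (S.~39) and (S.~38).
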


\begin{theorem}\label{thm-rank3-2}
Suppose that $h=\pm \frac{1}{2}$, $\alpha,\nu\in \mathbb{Q}$ and $\alpha>\frac{2}{3}$. We have
\begin{align}
&\sum_{i,j,k\geq 0} \frac{q^{\frac{1}{2}(\alpha h^2+\frac{1}{2})i^2+\frac{1}{2}\alpha j^2+\frac{1}{2}\alpha k^2+\alpha h ij-\alpha h ik+(1-\alpha)jk+(\alpha \nu h)i+\alpha \nu j-\alpha \nu k+\frac{1}{2}\alpha\nu^2}}{(q;q)_i(q;q)_j(q;q)_k} \nonumber \\
&=\frac{J_{2,10}J_{6,20}}{J_1^2J_{20}} \sum_{n\in \mathbb{Z}+\nu} q^{\frac{\alpha}{2}n^2} +q^{\frac{1}{4}}\frac{J_{3,10}J_{4,20}}{J_1^2J_{20}} \sum_{n\in \mathbb{Z}+\nu+\frac{1}{2}} q^{\frac{\alpha}{2}n^2}, \label{add-thm-id-3}\\
&\sum_{i,j,k\geq 0} \frac{q^{\frac{1}{2}(\alpha h^2+\frac{1}{2})i^2+\frac{1}{2}\alpha j^2+\frac{1}{2}\alpha k^2+\alpha h ij-\alpha h ik+(1-\alpha)jk+(\alpha \nu h+\frac{1}{2})i+\alpha \nu j-\alpha \nu k+\frac{1}{2}\alpha\nu^2}}{(q;q)_i(q;q)_j(q;q)_k}\nonumber \\
&=\frac{J_{1,10}J_{8,20}}{J_1^2J_{20}}\sum_{n\in \mathbb{Z}+\nu} q^{\frac{\alpha}{2}n^2}  +q^{\frac{3}{4}} \frac{J_{4,10}J_{2,20}}{J_1^2J_{20}}\sum_{n\in \mathbb{Z}+\nu+\frac{1}{2}} q^{\frac{\alpha}{2}n^2}.  \label{add-thm-id-4}
\end{align}
\end{theorem}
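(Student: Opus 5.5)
The plan is to separate the $\alpha$-dependence from the rest, following the strategy Zagier and Wang used for Examples 1 and 3. Put $m=j-k$ and $s=j+k$. A short computation shows that the $\alpha$-part of the quadratic form is a perfect square:
\begin{align*}
\tfrac{1}{2}\alpha h^2i^2+\tfrac{1}{2}\alpha j^2+\tfrac12\alpha k^2+\alpha hij-\alpha hik-\alpha jk+\alpha\nu hi+\alpha\nu j-\alpha\nu k+\tfrac12\alpha\nu^2=\tfrac{\alpha}{2}(hi+j-k+\nu)^2 .
\end{align*}
What remains is $\frac{1}{2}ci^2+jk+\beta i$, where $c\in\{1,\tfrac12\}$ and $\beta\in\{0,\pm\tfrac12\}$. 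The sum over $(j,k)$ with $j-k=m$ fixed is handled by the classical identity
\begin{align*}
\sum_{j-k=m}\frac{q^{jk}}{(q;q)_j(q;q)_k}=\frac{1}{(q;q)_\infty},
\end{align*}
valid for every $m\in\mathbb{Z}$; it is the constant-term/Durfee-type identity coming from the Jacobi triple product. Hence each left-hand side equals
\begin{align*}
\frac{1}{J_1}\sum_{i\ge0}\frac{q^{\frac{c}{2}i^2+\beta i}}{(q;q)_i}\sum_{m\in\mathbb{Z}}q^{\frac{\alpha}{2}(hi+m+\nu)^2}.
\end{align*}
Since $h=\pm\frac12$, the inner theta sum depends only on the parity of $i$. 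For $i$ even it equals $\theta_0:=\sum_{n\in\mathbb{Z}+\nu}q^{\frac{\alpha}{2}n^2}$, and for $i$ odd it equals $\theta_1:=\sum_{n\in\mathbb{Z}+\nu+\frac12}q^{\frac{\alpha}{2}n^2}$. The sign of $h$ is irrelevant because $m\mapsto -m$ changes nothing after the shift. The left-hand side therefore becomes
\begin{align*}
\frac{1}{J_1}\bigl(E_{c,\beta}\,\theta_0+O_{c,\beta}\,\theta_1\bigr),
\end{align*}
where $E_{c,\beta}$ and $O_{c,\beta}$ denote the even and odd parts of the rank-one sum $\sum_i q^{ci^2/2+\beta i}/(q;q)_i$. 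The condition on $\alpha$ only ensures positive definiteness, so the sums converge; it plays no role in the identity itself.

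The problem thus reduces to evaluating even/odd parts of rank-one Nahm sums, which are classical Rogers--Ramanujan/Slater-type dissections.

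For $c=1$: we have $\sum_i q^{i(i-1)/2}/(q;q)_i=(-1;q)_\infty=2J_2/J_1$. Its even and odd parts are equal, since they are the two halves of $(-1;q)_\infty=2(-q;q)_\infty$: pairing $z\mapsto -z$ in $(-z;q)_\infty$ and substituting $z=1$ gives $E=O=(-q;q)_\infty=J_2/J_1$. Then $\theta_0+\theta_1=\sum_{n\in\mathbb{Z}}q^{\frac{\alpha}{8}(n+2\nu)^2}$, which gives \eqref{exam1-1}.

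For $\beta=0$ with $c=1$, the even part $\sum_n q^{2n^2}/(q;q)_{2n}$ and the odd part $\sum_n q^{2n^2+2n+1/2}/(q;q)_{2n+1}$ are Slater's identities (Göllnitz–Gordon type mod 8 / mod 16). In product form they are $J_{1,8}J_{6,16}/(J_1J_{16})$-like expressions. I would verify them via Slater's list, or directly via Bailey pairs, and match them to \eqref{exam1-2}. For $c=\frac12$, $\beta\in\{0,\frac12\}$, the sums $\sum q^{n^2}/(q;q)_{2n}$, $\sum q^{n^2+n}/(q;q)_{2n+1}$ and their companions are Rogers' mod 20 identities (Slater (79), (94), (96), (99)). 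These yield $J_{2,10}J_{6,20}/(J_1J_{20})$ and the like, giving Theorem \ref{thm-rank3-2}.

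The main obstacle is precisely this last matching step. I must identify each parity component with the correct Slater identity, tracking the factors $q^{1/4}$ and $q^{3/4}$ from $\frac{c}{2}i^2+\beta i$ at odd $i$. I must also check that the resulting product forms coincide with the stated quotients $J_{a,10}J_{b,20}/J_{20}$, or $J_{a,8}J_{b,16}/J_{16}$ after the quintuple product rewriting. If a listed Slater form does not match directly, I would prove the needed identity from the Bailey pair relative to $q^2$, together with the quintuple product identity.
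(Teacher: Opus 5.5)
Your proposal is correct and follows essentially the paper's proof: complete the square to get $\frac{\alpha}{2}(hi+j-k+\nu)^2$, sum over $j-k=n$ with the Durfee rectangle identity, split $i$ by parity (the theta sum depends only on $hi \bmod 1$), and evaluate the resulting rank-one series by Slater's mod 20 identities. No further matching or quintuple-product work is needed, because the even and odd parts are exactly $\sum q^{n^2}/(q;q)_{2n}$ and $q^{1/4}\sum q^{n^2+n}/(q;q)_{2n+1}$ (Slater (98), (94)) for the first identity, and $\sum q^{n^2+n}/(q;q)_{2n}$ and $q^{3/4}\sum q^{n^2+2n}/(q;q)_{2n+1}$ (Slater (99), (96)) for the second, already in the stated product forms.
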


Second, when analyzing certain sums through the constant term method, we find a new set of rank four modular triples  $(A,B,C)$  recorded in Table \ref{tab-new} where we list their duals as well.  We establish the following identities to justify the modularity of the triples $(A,B,C)$.
\begin{table}[htbp]
    \centering
    \footnotesize
    \caption{New rank four modular triples $(A,B,C)$ and their duals}\label{tab-new}
    \begin{tabular}{|c|ccc|c|ccc|}
      \hline \xrowht{50pt}
   $A$ & \multicolumn{3}{c|}{$\begin{pmatrix} 6 & -5 & 2 & -3 \\  -5 & 6 & -2 & 2 \\ 2 & -2 & 2 & -1 \\ -3 & 2 & -1 & 2  \end{pmatrix}$} & $\mathcal{D}(A)$ &  \multicolumn{3}{c|}{$\begin{pmatrix} 2 & 1 & 0 & 2 \\  1 & 4/5 & 1/5 & 4/5  \\ 0 & 1/5 & 4/5 & 1/5 \\ 2 & 4/5 & 1/5 & 14/5  \end{pmatrix}$} \\
   \hline \xrowht{50pt}
        $B$ & $\begin{pmatrix}  0 \\ 0 \\ 0 \\ 0  \end{pmatrix}$ & $\begin{pmatrix}  0 \\ 0 \\ -1/2 \\ 1/2  \end{pmatrix}$ & $\begin{pmatrix} 1 \\ -1 \\ 0 \\ 0 \end{pmatrix}$  & $\mathcal{D}(B)$ & $\begin{pmatrix} 0 \\ 0 \\ 0 \\ 0 \end{pmatrix}$ &
$\begin{pmatrix} 1 \\ 3/10 \\ -3/10 \\ 13/10 \end{pmatrix}$ &$\begin{pmatrix} 1 \\ 1/5 \\ -1/5 \\ 6/5 \end{pmatrix}$  \\
        $C$ & $-13/120$ & $17/480$ & $1/15$ & $\mathcal{D}(C)$ & $-7/120$ & $19/96$  &  $1/6$  \\
    \hline
    \end{tabular}
\end{table}
\begin{theorem}\label{thm-rank4-new} 
Let
\begin{align}
F(u,v,w,t;q^2):=\sum_{i,j,k,l\geq 0} \frac{u^iv^jw^kt^lq^{6i^2+6j^2+2k^2+2l^2-10ij+4ik-6il-4jk+4jl-2kl}}{(q^2;q^2)_i(q^2;q^2)_j(q^2;q^2)_k(q^2;q^2)_l}.  
\end{align}
We have
\begin{align}
&F(1,1,1,1;q^2)=\frac{J_{4}^{3}J_{8}^{3}J_{4,20}J_{12,40}}{J_{2}^{5}J_{16}^{2}J_{40}}+4q^2\frac{J_{8}J_{16}^{2}J_{6,20}J_{8,40}}{J_{2}^{3}J_{4}J_{40}}, \label{thm4-1} \\
&F(1,1,q^{-1},q;q^2)=\frac{\overline{J}_{1,4}\overline{J}_{3,8}J_{2,20}J_{16,40}}{J_{2}^{3}J_{40}}+q^2\frac{\overline{J}_{1,4}\overline{J}_{1,8}J_{8,20}J_{4,40}}{J_{2}^{3}J_{40}}, \label{thm4-2}\\
&F(q^2,q^{-2},1,1;q^2)=\frac{J_{4}^{5}\overline{J}_{2,8}J_{2,20}J_{16,40}}{J_{2}^{5}J_{8}^{2}J_{40}}+2q^2\frac{J_{8}^{2}\overline{J}_{2,8}J_{8,20}J_{4,40}}{J_{2}^{3}J_{4}J_{40}}.  \label{thm4-3}
\end{align}
\end{theorem}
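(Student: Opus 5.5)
We would prove Theorem \ref{thm-rank4-new} by the constant term method: reduce the quadruple sum to a single (or double) sum by summing out variables with Euler's identities, and then evaluate what remains with Bailey-pair or Rogers--Ramanujan type identities. Write the exponent as a quadratic form $Q(i,j,k,l)$ with matrix $2A$, where $A$ is the matrix in Table \ref{tab-new}. We then complete squares so that $Q$ becomes a sum of terms of the form (linear form)$^2$ plus $k^2$-type diagonal terms.

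For the variables $i,j$ the combination $6i^2+6j^2-10ij=\tfrac12\big((i-j)^2\cdot 11+(i+j)^2\big)$ suggests introducing the difference $i-j$. Concretely, we use the Euler identities
\begin{align*}
\sum_{n\ge0}\frac{z^n q^{n^2-n}}{(q^2;q^2)_n}=(-z;q^2)_\infty,\qquad \sum_{n\ge0}\frac{z^n}{(q^2;q^2)_n}=\frac{1}{(z;q^2)_\infty}.
\end{align*}
With them we write each factor $q^{2k^2}/(q^2;q^2)_k$ and $q^{2l^2}/(q^2;q^2)_l$ as a constant term in an auxiliary variable $x$ (resp.\ $y$) of a product of theta-type series times $(-x\cdots;q^2)_\infty$. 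The cross terms $4ik-4jk-2kl$ and $-6il+4jl$ are absorbed by the substitution $x\mapsto x q^{4i-4j}$, and similarly for $y$. Summing over $i$ and $j$ then gives products of the form $1/(ux^{\cdot}y^{\cdot};q^2)_\infty$-modified series. Taking the Jacobi triple product in the remaining variables expresses $F(u,v,w,t;q^2)$ as $\CT_{x,y}$ of a ratio of infinite products.

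The concrete plan is as follows.

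First, sum over $k$ and $l$ as inner sums for fixed $(i,j)$. The $kl$ coupling $-2kl$ together with $2k^2+2l^2$ is the rank two form $\left(\begin{smallmatrix}2&-1\\-1&2\end{smallmatrix}\right)$ (in $q^2$). Its sum with linear terms $(4i-4j)k+(4j-6i)l$ can be evaluated as an $A_2$-type sum via \eqref{Andrews-id} run in reverse. This collapses $(k,l)$ into a single sum weighted by $1/(q^2;q^2)_m$.

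Second, the remaining triple sum in $(i,j,m)$ should reduce, via $n=i-j$, to a sum over $n\in\mathbb{Z}$ of $q^{cn^2}$ times a Rogers--Ramanujan type single sum depending on the parity of $n$. The parity split $n$ even/odd explains the two-term shape of each right-hand side: the first term carries $J_{4,20}J_{12,40}$ and the second carries $J_{6,20}J_{8,40}$. Those factors are the classical mod 20 and mod 40 products (Rogers' identities of modulus 20 / Slater's list), and the $J_{4}^3J_8^3/J_2^5J_{16}^2$ parts are what remain from the theta function in $n$ after applying the quintuple or triple product.

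Third, we evaluate each parity piece by identifying a Bailey pair relative to $q^2$ (or $q^4$) and applying Slater-list identities of modulus 20.

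We carry out the same computation for the specializations $(u,v,w,t)=(1,1,q^{-1},q)$ and $(q^2,q^{-2},1,1)$. These only shift the linear terms, giving $\overline{J}_{1,4}\overline{J}_{3,8}$ and $\overline{J}_{2,8}$ factors through the triple product with $-q^{\cdot}$.

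The main obstacle is the second step: choosing the right order of summation and substitution so that the inner sums become exactly a known Bailey pair rather than a non-closed-form double sum. We would first try summing over $i$ with $j-i$ fixed using the $q$-binomial theorem. If that fails, we would use the constant term representation and a Laurent expansion in $x$, matching coefficients against the triple product. As a fallback, we would verify the final product identities by comparing the dissections: modular forms on $\Gamma_1(40)$ in $q$ with enough coefficients checked to the Sturm bound, once both sides are known to be modular of weight $1/2$-type, which is possible only if the left side is first reduced to theta-quotient form.
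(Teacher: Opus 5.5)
There is a genuine gap. The one concrete reduction you commit to does not work, and what follows it describes the expected answer rather than deriving it. You write that the sum ``should reduce'' and concede that the main step is open. The Sturm-bound fallback is circular, because it presupposes that the left side is already known to be modular.

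In base $Q=q^2$ your inner sum is $\sum_{k,l\ge0}Q^{k^2-kl+l^2+(2i-2j)k+(2j-3i)l}/((Q;Q)_k(Q;Q)_l)$. Identity \eqref{Andrews-id} read backwards only eliminates the extra variable of a lifted sum of the shape \eqref{eq-Nahm-lift}. That is, it turns a triple sum carrying a positive coupling $q^{(i-k)(j-k)}$ into a double sum. It cannot collapse this $A_2$-type double sum, with negative coupling $-kl$, into a single sum. If you instead write $k=l+n$, you get $Q^{n^2+(2i-2j)n}\sum_{l}Q^{l(l+n)-il}/((Q;Q)_l(Q;Q)_{l+n})$. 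For every $i\ge1$ the factor $Q^{-il}$ takes you outside the Durfee identity \eqref{Durfee}, so you have no closed form to feed into steps two and three.

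What is missing is the mechanism that actually produces closed forms.
\begin{itemize}
\item The paper writes the exponent as $(2i-2j+k-l)^2+(i-j)^2+(i-l)^2+j^2+k^2$. It encodes the first three squares by theta series in $x,y,z$, and sums all four indices with \eqref{Euler1} and \eqref{Euler2}.
\item The $z$-constant term of $(-qz,-q/z,q^2;q^2)_\infty/(ux^2yz,t/(xz);q^2)_\infty$ is then evaluated exactly. Expand both quotients by \eqref{q-binomial}, take the diagonal, and apply $q$-Gauss \eqref{Gauss}; this gives $(-qux^2y,-qt/x;q^2)_\infty/(utxy;q^2)_\infty$.
\item The decisive observation, absent from your plan, is that all three specializations satisfy $v=u^{-1}$ and $w=t^{-1}$. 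This turns $(-qux^2y,-q/(ux^2y);q^2)_\infty$ and $(-qx/t,-qt/x;q^2)_\infty$ into Jacobi triple products. As a result, $\CT_y$ and then $\CT_x$ can be extracted termwise.
\item The two-term shape comes from the parity of the index $i$ in $\sum_{i\ge0}\sum_{m\in\mathbb{Z}}(utx)^i(ux^2)^{-m}q^{i^2-2im+2m^2}/(q^2;q^2)_i$. It does not come from the parity of $i-j$ in the original sum.
\item That parity split produces $\sum_i q^{2i^2}/(q^2;q^2)_{2i}$ and $\sum_i q^{2i^2+2i}/(q^2;q^2)_{2i+1}$, or these with an extra factor $q^{2i}$. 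These are (S.~98), (S.~94), (S.~99) and (S.~96) with $q\to q^2$.
\end{itemize}
No Bailey pair and no quintuple product are needed. The eta-quotient prefactors are simply the theta products left after the final constant term in $x$, together with the $J_2$ from the Slater products.
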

We expect that the dual triples $\mathcal{D}(A,B,C)$ listed in Table \ref{tab-new} are also modular. However, we are not able to confirm their modularity at this stage and leave it to the interested readers.

The second way to discover new modular triples is utilizing the lift-dual operation introduced by the authors \cite{Cao-Wang2024,Cao-Wang2025}.  We find three new sets of rank four modular Nahm sums after applying the lift-dual operation to the modular triples in \eqref{MW-mt}, which was not considered in \cite{Cao-Wang2025}. Note that $\mathcal{L}_2(A)$ is a singular matrix and it remains to consider the lift-dual by $\mathcal{L}_1$ and $\mathcal{L}_3$. We find that both of them produce modular Nahm sums. 
For example, we list the lift and lift-dual via $\mathcal{L}_{3}$ of the modular triples in \eqref{MW-mt} in Table \ref{tab:MW-DL3}.
\begin{table}[htbp]
\centering
\caption{Lift and lift-dual via $\mathcal{L}_{3}$ of the modular triples $(A,B,C)$ in \eqref{MW-mt}}
\label{tab:MW-DL3}
\begin{tabular}{|c|cccccc|}
\hline \xrowht{40pt}
   $\mathcal{L}_{3}(A)$ & \multicolumn{6}{c|}{$\left(\begin{smallmatrix} 2 & 0 & 0 & 1 \\ 0 & 2 & -1 & 1 \\ 0 & -1 & 1 & -1 \\ 1 & 1 & -1 & 2 \end{smallmatrix}\right)$} \\
\hline \xrowht{34pt}
   $\mathcal{L}_{3}(B)$ & $\left(\begin{smallmatrix} 0 \\ 0 \\ 0 \\ 0 \end{smallmatrix}\right)$ & $\left(\begin{smallmatrix} 0 \\ 0 \\ 1/2 \\ 0 \end{smallmatrix}\right)$ & $\left(\begin{smallmatrix} -1 \\ 1 \\ 0 \\ 0 \end{smallmatrix}\right)$ & $\left(\begin{smallmatrix} 1 \\ -1 \\ 1/2 \\ 0 \end{smallmatrix}\right)$ & $\left(\begin{smallmatrix} -1 \\ 1 \\ -1/2 \\ 0 \end{smallmatrix}\right)$ & $\left(\begin{smallmatrix} -2 \\ 2 \\ -1/2 \\ 0 \end{smallmatrix}\right)$  \\ 
\xrowht{10pt}
   $\mathcal{L}_{3}(C)$ 
   & $-\frac{7}{80}$ & $\frac{1}{40}$ & $\frac{17}{80}$ & $\frac{9}{40}$ & $\frac{9}{40}$ & $\frac{41}{40}$ \\
\hline \xrowht{40pt}
   $\mathcal{DL}_{3}(A)$ & \multicolumn{6}{c|}{$\left(\begin{smallmatrix} 1 & 0 & -1 & -1 \\ 0 & 1 & 1 & 0 \\ -1 & 1 & 4 & 2 \\ -1 & 0 & 2 & 2 \end{smallmatrix}\right)$} \\
\hline \xrowht{34pt}
   $\mathcal{DL}_{3}(B)$ & $\left(\begin{smallmatrix} 0 \\ 0 \\ 0 \\ 0 \end{smallmatrix}\right)$ & $\left(\begin{smallmatrix} -1/2 \\ 1/2 \\ 2 \\ 1  \end{smallmatrix}\right)$ & $\left(\begin{smallmatrix} -1 \\ 1 \\ 2 \\ 1 \end{smallmatrix}\right)$ & $\left(\begin{smallmatrix} 1/2 \\ -1/2 \\ 0 \\ 0 \end{smallmatrix}\right)$ & $\left(\begin{smallmatrix} -1/2 \\ 1/2 \\ 0 \\ 0 \end{smallmatrix}\right)$ & $\left(\begin{smallmatrix} -3/2 \\ 3/2 \\ 2 \\ 1 \end{smallmatrix}\right)$\\
\xrowht{10pt}
   $\mathcal{DL}_{3}(C)$ 
    & $-\frac{19}{240}$ & $\frac{37}{120}$ & $\frac{149}{240}$ & $\frac{13}{120}$ & $\frac{13}{120}$ & $\frac{157}{120}$ \\
\hline
\end{tabular}
\end{table}
We establish the following identities to confirm the modularity of the triples $\mathcal{DL}_3(A,B,C)$ in Table \ref{tab:MW-DL3}. 
\begin{theorem}\label{thm-MW-L3}
Let  
\begin{align}
F(u,v,w,t;q):=\sum_{i,j,k,l\geq 0}\frac{u^{i}v^{j}w^{k}t^{l}q^{i^2/2+j^2/2+2k^2+l^2-ik-il+jk+2kl}}{(q;q)_{i}(q;q)_{j}(q;q)_{k}(q;q)_{l}}.   
\end{align}
We have 
\begin{align}
&F(1,1,1,1;q)=\frac{J_{1}^{5}J_{2,5}}{J_{1/2}^{3}J_{2}^{3}}, \label{thm-MW-id-1}\\
&F(q^{-1/2},q^{1/2},q^{2},q;q)=\frac{2J_{2}^{3}}{J_{1}^{4}}(\overline{J}_{7,20}-q\overline{J}_{3,20}), \label{thm-MW-id-2}\\
&F(q^{-1},q,q^{2},q;q)=q^{-1/2}\frac{J_{1}^{5}J_{1,5}}{J_{1/2}^{3}J_{2}^{3}}, \label{thm-MW-id-3} \\
&F(q^{1/2},q^{-1/2},1,1;q)=2\frac{J_{2}^{3}J_{2,5}}{J_{1}^{4}},  \label{thm-MW-id-4} \\
&F(q^{-1/2},q^{1/2},1,1;q)=4\frac{J_{2}^{3}}{J_{1}^{4}}(\overline{J}_{9,20}-q^{2}\overline{J}_{1,20}), \label{thm-MW-id-5}\\
&F(q^{-3/2},q^{3/2},q^{2},q;q)=4q^{-1}\frac{J_{2}^{3}J_{1,5}}{J_{1}^{4}}. \label{thm-MW-id-6}
\end{align}
\end{theorem}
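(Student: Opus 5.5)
The plan is to sum over two of the four variables in closed form and reduce $F(u,v,w,t;q)$ to a rank two sum that can be handled by known identities. The quadratic form is
\[
\tfrac12 i^2+\tfrac12 j^2+2k^2+l^2-ik-il+jk+2kl .
\]
Both $i$ and $j$ enter with coefficient $\tfrac12$ on the square and only linearly couple to $k,l$. So I will sum over $i$ and $j$ first, using Euler's identity
\[
\sum_{i\ge0}\frac{q^{i(i-1)/2}z^i}{(q;q)_i}=(-z;q)_\infty .
\]
This gives
\[
F(u,v,w,t;q)=\sum_{k,l\ge0}\frac{w^kt^lq^{2k^2+l^2+2kl}}{(q;q)_k(q;q)_l}\,(-uq^{1/2-k-l};q)_\infty(-vq^{1/2+k};q)_\infty .
\]
I will then write $(-uq^{1/2-k-l};q)_\infty=(-uq^{1/2};q)_\infty\,(-q^{1/2}/u;q)_{k+l}\,u^{k+l}q^{-(k+l)^2/2}$, and similarly $(-vq^{1/2+k};q)_\infty=(-vq^{1/2};q)_\infty/(-vq^{1/2};q)_k$. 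For the specializations in the theorem, $uv$ is always $1$ and $u\in\{q^{\pm1/2},q^{\pm1},q^{\pm3/2}\}$. These products become explicit factors such as $(-q;q)_\infty^2$ or $(-q^{1/2};q)_\infty^2$ times short finite corrections, which accounts for the factors $J_2^3/J_1^4$ and $J_1^5/(J_{1/2}^3J_2^3)$ on the right-hand sides. What remains is a double sum in $k,l$ with quadratic exponent $\tfrac32k^2+\tfrac12 l^2+kl$ (up to linear terms), a finite product $(-q^{1/2}/u;q)_{k+l}$ in the numerator, and $(-vq^{1/2};q)_k$ in the denominator.

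Next I will set $n=k+l$ and sum over $k$ for fixed $n$. The inner sum
\[
\sum_{k}\frac{q^{k^2+\dots}}{(q;q)_k(q;q)_{n-k}(-vq^{1/2};q)_k}
\]
is a terminating ${}_2\phi_1$- or ${}_3\phi_2$-type sum. I expect it to be summable by $q$-Chu--Vandermonde, or else transformable so that the outer sum over $n$ becomes a rank one sum of the form
\[
\sum_n \frac{q^{n^2/2+\dots}(-q^{1/2}/u;q)_n}{(q;q)_n(\cdots)_n} .
\]
The final single sums should then be recognizable from Slater's list: the Rogers--Ramanujan sums give $J_{2,5},J_{1,5}$, and the mod $20$ identities give $\overline{J}_{7,20}-q\overline{J}_{3,20}$ and $\overline{J}_{9,20}-q^2\overline{J}_{1,20}$.

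If the direct inner sum does not close, I will use a fallback that exploits the provenance of these sums. By construction, $\mathcal{DL}_3(A,B,C)$ is the dual of a lift of the Milas--Wang tadpole triple. So I will instead use the constant term method: write $F$ as $\CT_z$ of a product of theta functions obtained via the Jacobi triple product, as in \cite{Cao-Wang2024,Cao-Wang2025}, and reduce to the rank three identities of \cite{MW24}.

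I expect the main obstacle to be the middle step: identifying the correct terminating summation or Bailey pair for the inner $k$-sum when $u\ne1$. In those cases the shifts $q^{\pm1/2},q^{\pm3/2}$ produce extra boundary terms, and these must be shown to cancel or to combine into the two-term theta differences appearing in \eqref{thm-MW-id-2} and \eqref{thm-MW-id-5}. As a safeguard I will first verify each identity numerically to high order in $q$, and use the coefficients to guess intermediate forms.
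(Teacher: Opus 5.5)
\textbf{There is a genuine gap at the step where you sum over $k$ for fixed $n=k+l$.}

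Your first reduction is correct. It is exactly the paper's \eqref{Dex6-L3}: summing over $i,j$ by Euler's identity and pulling out $(-uq^{1/2},-vq^{1/2};q)_\infty$ leaves
\[
\sum_{n\ge 0}(ut)^nq^{n^2/2}(-q^{1/2}/u;q)_n\,\beta_n^{*},\qquad
\beta_n^{*}:=\sum_{k=0}^{n}\frac{(w/t)^kq^{k^2}}{(-vq^{1/2};q)_k(q;q)_k(q;q)_{n-k}} .
\]
The inner sum $\beta_n^{*}$ has no closed form.
\begin{itemize}
\item Writing $1/(q;q)_{n-k}$ in terms of $(q^{-n};q)_k$ gives $\beta_n^{*}=(q;q)_n^{-1}\,{}_1\phi_1\bigl(q^{-n};-vq^{1/2};q,(w/t)q^{n+1}\bigr)$.
\item The $q$-Chu--Vandermonde evaluation you hope for, in its $a\to\infty$ form ${}_1\phi_1(q^{-n};c;q,cq^n)=1/(c;q)_n$, would need $w/t=-vq^{-1/2}$. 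This fails in all six specializations.
\item Concretely, for $u=v=w=t=1$ and $n=2$, with $x=q^{1/2}$, one gets $\beta_2^{*}=(1+x+x^2+x^3+2x^4+x^5+x^7+x^8)/\bigl((1+x)(1+x^3)(1-x^2)^2(1+x^2)\bigr)$. The numerator is coprime to the denominator and is not a product of cyclotomic polynomials: its value at $x=1$ is $9$, yet $1+x+x^2$ does not divide it. So this $\beta_2^{*}$ is not a product.
\end{itemize}
Hence the outer sum never becomes a single sum of the shape you describe, and Slater's list cannot be applied. Your constant-term fallback has no mechanism behind it either. Lifting preserves a Nahm sum but taking the dual does not, so the identities of \cite{MW24} for $(T_3,B_i,C_i)$ give nothing directly about $\mathcal{DL}_3(A,B,C)$.

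\textbf{The missing idea is to use Bailey's lemma twice.}
\begin{itemize}
\item First, $\beta_n^{*}$ is precisely the $\beta'_n$ that \eqref{S1} produces from a known Bailey pair. The six cases use \eqref{H2}, \eqref{E4}, \eqref{BP-1}, \eqref{BP-2}, \eqref{E1}, \eqref{BP-3} respectively, relative to $a=1$ or $a=q$. The factor $(w/t)^k=q^k$ is absorbed either into $\beta_k$ or into $a^k$.
\item Second, the outer weight $(ut)^nq^{n^2/2}(-q^{1/2}/u;q)_n$ is matched with the limiting case $\rho_1\to\infty$, $\rho_2=-q^{1/2}/u$ of Bailey's lemma. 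In cases one through five this is \eqref{S4*}, \eqref{S2*}, \eqref{S4*}, \eqref{S6*}, \eqref{S5*} respectively.
\item This collapses the whole double sum to an infinite product times an explicit theta series, and the Jacobi triple product finishes.
\end{itemize}
The parameter $a$ must also be matched:
\begin{itemize}
\item In case two, \eqref{E4} is relative to $1$ but the weight $q^{(n^2+n)/2}(-q;q)_n$ requires $a=q$. So you must first move the pair with \eqref{Lovejoy-a-aq-1}.
\item In case six, no admissible $\rho_2$ equals $-q^2$. You must split $(-q^2;q)_n=(-q;q)_n(1+q^{n+1})/(1+q)$.
\item You then evaluate one piece by \eqref{S2*} at $a=q$ and the other by \eqref{S5*} after shifting the pair to $a=1$ via \eqref{McLaughlin-a-a/q}.
\item Finally, check that the terms with denominators $1+q^n$ and $1+q^{n+1}$ cancel in the total.
\end{itemize}
Your proposal mentions ``Bailey pair'' only as an alternative and contains none of this, so as it stands it does not prove any of the six identities.
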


The rest of this paper is organized as follows. In Section \ref{sec-pre} we collect some auxiliary identities and review some knowledge about Bailey pairs. We give proofs of Theorems \ref{thm-rank3}--\ref{thm-rank4-new} in Section \ref{sec-proof}. Section \ref{sec-MW} is devoted to the discussion of the  lift-dual of the Milas--Wang example \eqref{MW-mt}. We first show that the Nahm sums lifted from \eqref{MW-mt} through $\mathcal{L}_1$ can be generalized to include some parameters. We will then prove Theorems \ref{thm-MW-L3} and \ref{thm-MW-L1} which confirm the modularity of the lift-dual via $\mathcal{L}_3$ and $\mathcal{L}_1$ of \eqref{MW-mt}, respectively.

\section{Preliminaries}\label{sec-pre}
We first review some useful identities from the theory of $q$-series. 

The $q$-binomial theorem \cite[Theorem 2.1]{Andrews1998} states that
\begin{align}\label{q-binomial}
\sum_{n=0}^\infty \frac{(a;q)_n}{(q;q)_n}z^n=\frac{(az;q)_\infty}{(z;q)_\infty}. 
\end{align}
As its consequences, we have Euler's $q$-exponential identities \cite[Corollary 2.2]{Andrews1998}:
\begin{align}\label{Euler1}
\sum_{n=0}^\infty \frac{z^n}{(q;q)_n}&=\frac{1}{(z;q)_\infty}, \quad
|z|<1,  
\end{align}
\begin{align}\label{Euler2}
\sum_{n=0}^\infty \frac{z^nq^{\frac{n^2-n}{2}}}{(q;q)_n}&=(-z;q)_\infty.
\end{align}
The Jacobi triple product identity \cite[Theorem 2.8]{Andrews1998} asserts that
\begin{align}\label{Jacobi}
(z,q/z,q;q)_\infty=\sum_{n=-\infty}^{{\infty}} (-z)^n q^{\frac{n^2-n}{2}}.
\end{align}
We also need the $q$-Gauss summation formula \cite[Corollary 2.4]{Andrews1998}:
\begin{align} \label{Gauss}
\sum_{n=0}^\infty \frac{(a,b;q)_n}{(q,c;q)_n}\big(\frac{c}{ab}\big)^n=\frac{(c/a,c/b;q)_\infty}{(c,c/ab;q)_\infty},  \quad \left| \frac{c}{ab} \right|<1.
\end{align}

We recall some Rogers--Ramanujan type identities from Slater's list \cite{Slater1952}:
\begin{align}
&\sum_{n=0}^\infty \frac{q^{2n^2+n}}{(q;q)_{2n+1}}=\frac{J_{2}}{J_{1}} \quad \text{(S. 9)} \label{S. 9},\\
&\sum_{n=0}^\infty \frac{q^{2n^2+2n}}{(q;q)_{2n+1}}=\frac{J_{3,8}J_{2,16}}{J_{1}J_{16}} \quad \text{(S. 38)}, \label{S. 38}\\
&\sum_{n=0}^\infty \frac{q^{2n^2}}{(q;q)_{2n}}=\frac{J_{1,8}J_{6,16}}{J_{1}J_{16}} \quad \text{(S. 39)}, \label{S. 39}\\
&\sum_{n=0}^\infty \frac{q^{2n^2-n}}{(q;q)_{2n}}=\frac{J_{2}}{J_{1}} \quad \text{(S. 85)}, \label{S. 85}\\
&\sum_{n=0}^\infty \frac{q^{n^2+n}}{(q;q)_{2n+1}}=\frac{J_{3,10}J_{4,20}}{J_1J_{20}} \quad \text{(S. 94)}, \label{S. 94}\\
&\sum_{n=0}^\infty \frac{q^{n^2+2n}}{(q;q)_{2n+1}}=\frac{J_{4,10}J_{2,20}}{J_1J_{20}} \quad \text{(S. 96)}, \label{S. 96}\\
&\sum_{n=0}^\infty \frac{q^{n^2}}{(q;q)_{2n}}=\frac{J_{2,10}J_{6,20}}{J_1J_{20}} \quad \text{(S. 98)}, \label{S. 98}\\
&\sum_{n=0}^\infty \frac{q^{n^2+n}}{(q;q)_{2n}}=\frac{J_{1,10}J_{8,20}}{J_1J_{20}} \quad \text{(S. 99)}. \label{S. 99}
\end{align}
Here we use (S. $n$) to denote the $n$-th identity from Slater's list \cite{Slater1952}.

We will also need the theory of Bailey pairs. 
A pair of sequences $(\alpha_n(a;q),\beta_n(a;q))$ is called a Bailey pair relative to $a$ if for all $n\geq 0$, 
\begin{align}\label{defn-BP}
     \beta_n(a;q)=\sum_{k=0}^n\frac{\alpha_k(a;q)}{(q;q)_{n-k}(aq;q)_{n+k}}.
 \end{align}

We recall the E(1), E(4) and H(2) Bailey pairs from Slater's work \cite{Slater1951} (where $\alpha_{0}(1; q)=1$):
\begin{align}
&\left\{\begin{aligned}
&\alpha_{2n}(1;q)=2q^{4n^2},\quad \alpha_{2n+1}(1;q)=-2q^{4n^2+4n+1},\\
&\beta_{n}(1;q)=\frac{1}{(q^2;q^2)_{n}},
\end{aligned}\right.
\label{E1}
\tag{E(1)}\\[1.1ex]
&\left\{\begin{aligned}
&\alpha_{2n}(1;q)=q^{4n^2}(q^{2n}+q^{-2n}),\quad \alpha_{2n+1}(1;q)=-q^{4n^2+2n}-q^{4n^2+6n+2},\\
&\beta_{n}(1;q)=\frac{q^n}{(q^2;q^2)_{n}},
\end{aligned}\right.
\label{E4}
\tag{E(4)}\\[1.1ex]
&\left\{\begin{aligned}
&\alpha_{n}(1;q)=(-1)^nq^{n^2}(q^{n/2}+q^{-n/2}),\\
&\beta_{n}(1;q)=\frac{1}{(-q^{1/2},q;q)_{n}}.
\end{aligned}\right.
\label{H2}
\tag{H(2)}
\end{align}
Recall the following identity \cite[Eq.~(4.1)]{Slater1951}:
\begin{align}\label{Slater4.1}
\sum_{r=0}^{n}\frac{(1-aq^{2r})(-a)^rq^{(r^2+r)/2}(a,c,d;q)_{r}}{(a;q)_{n+r+1}(q;q)_{n-r}(q,aq/c,aq/d;q)_{r}(cd)^r}=\frac{(aq/cd;q)_{n}}{(q,aq/c,aq/d;q)_{n}}.   
\end{align}
We now choose some special values for $a$, $c$ and $d$ to obtain another three Bailey pairs:
\begin{enumerate}[(1)]
\item Letting $a=q$, $c=-q^{1/2}$, $d\rightarrow\infty$ in \eqref{Slater4.1}, we obtain
\begin{align}\label{Slater4.1-1}
\sum_{r=0}^{n}\frac{(1-q^{r+\frac{1}{2}})(-1)^rq^{r^2+\frac{r}{2}}}{(q^2;q)_{n+r}(q;q)_{n-r}(1-q^{1/2})}=\frac{1}{(-q^{3/2},q;q)_{n}}.      
\end{align}
Therefore, we have the following Bailey pair relative to $q$:
\begin{align}\label{BP-1}
\begin{split}
\alpha_{n}(q;q)&=\frac{(1-q^{n+\frac{1}{2}})(-1)^nq^{n^2+\frac{n}{2}}}{1-q^{1/2}},\\
\beta_{n}(q;q)&=\frac{1}{(-q^{3/2},q;q)_{n}}.
\end{split} 
\end{align}
\item Letting $a=1$, $c=-q$, $d\rightarrow\infty$ in \eqref{Slater4.1}, we obtain
\begin{align}\label{Slater4.1-2}
\frac{1}{(q;q)_{n}^{2}}+\frac{1}{2}\sum_{r=1}^{n}\frac{(1+q^{r})^2(-1)^rq^{r^2-r}}{(q;q)_{n+r}(q;q)_{n-r}}=\frac{1}{(-1,q;q)_{n}}.      
\end{align}
Therefore, we have the following Bailey pair relative to $1$:
\begin{align}\label{BP-2}
\begin{split}
\alpha_{0}(1;q)&=1,\quad \alpha_{n}(1;q)=\frac{1}{2}(1+q^{n})^2(-1)^nq^{n^2-n},\\
\beta_{n}(1;q)&=\frac{1}{(-1,q;q)_{n}}.
\end{split}  
\end{align}

\item Letting $a=q^2$, $c=-q$, $d\rightarrow\infty$ in \eqref{Slater4.1}, we obtain
\begin{align}\label{Slater4.1-3}
\sum_{r=0}^{n}\frac{(-1)^rq^{r^2+r}(1-q^{2r+2})(1+q)(1-q^{r+1})}{(q^2;q)_{n+r+1}(q;q)_{n-r}(1-q)(1+q^{r+1})}=\frac{1}{(-q^2,q;q)_{n}}.    
\end{align}
Note that 
$$1-q^{2r+2}=(1-q^{n+r+2})-q^{2r+2}(1-q^{n-r}).$$
We can rewrite \eqref{Slater4.1-3} as
\begin{align}
&\sum_{r=0}^{n}\frac{(-1)^rq^{r^2+r}(1+q)(1-q^{r+1})}{(q^2;q)_{n+r}(q;q)_{n-r}(1-q)(1+q^{r+1})} \nonumber\\
&-\sum_{r=0}^{n-1}\frac{(-1)^rq^{r^2+3r+2}(1+q)(1-q^{r+1})}{(q^2;q)_{n+r+1}(q;q)_{n-r-1}(1-q)(1+q^{r+1})} \nonumber\\
&=\frac{1}{(q^2,q;q)_{n}}+\frac{1+q}{1-q}\sum_{r=1}^{n}\frac{(-1)^rq^{r^2+r}}{(q^2;q)_{n+r}(q;q)_{n-r}}\Big(\frac{1-q^{r+1}}{1+q^{r+1}}+\frac{1-q^{r}}{1+q^{r}}\Big) \nonumber\\
&=\frac{1}{(q^2,q;q)_{n}}+\frac{1+q}{1-q}\sum_{r=1}^{n}\frac{(-1)^rq^{r^2+r}}{(q^2;q)_{n+r}(q;q)_{n-r}}\frac{2(1-q^{2r+1})}{(1+q^{r})(1+q^{r+1})} \nonumber\\
&=\frac{1}{(-q^2,q;q)_{n}}. 
\end{align}
Therefore, we have the following  Bailey pair relative to $q$:
\begin{align}\label{BP-3}
\begin{split}
\alpha_{0}(q;q)&=1,\quad \alpha_{n}(q;q)=\frac{2(1+q)(1-q^{2n+1})(-1)^nq^{n^2+n}}{(1-q)(1+q^n)(1+q^{n+1})},\\
\beta_{n}(q;q)&=\frac{1}{(-q^2,q;q)_{n}}.
\end{split} 
\end{align}
\end{enumerate} 
Bailey's lemma \cite{Bailey1949} plays a prominent role in the theory of Bailey pairs.
\begin{lemma}[Bailey's lemma]   
Suppose that $(\alpha_{n}(a;q),\beta_{n}(a;q))$ is a Bailey pair relative to $a$, then
\begin{align}\label{Bailey's lemma}
&\alpha_{n}'(a;q)=\frac{(\rho_{1},\rho_{2};q)_{n}}{(aq/\rho_{1},aq/\rho_{2};q)_{n}}\left( \frac{aq}{\rho_{1}\rho_{2}}\right)^{n}\alpha_{n}(a;q),  \\
&\beta_{n}'(a;q)=\sum_{k=0}^{n}\frac{(\rho_{1},\rho_{2};q)_{k}(aq/\rho_{1}\rho_{2};q)_{n-k}}{(aq/\rho_{1},aq/\rho_{2};q)_{n}(q;q)_{n-k}}\left( \frac{aq}{\rho_{1}\rho_{2}}\right)^{k}\beta_{k}(a;q)   
\end{align}
is another Bailey pair relative to $a$. In other words, if $(\alpha_{n}(a;q), \beta_{n}(a;q))$ is a Bailey pair, then we have
\begin{align}\label{Bailey's lemma-1}
&\frac{1}{(aq/\rho_{1},aq/\rho_{2};q)_{n}}\sum_{k=0}^{n}\frac{(\rho_{1},\rho_{2};q)_{k}(aq/\rho_{1}\rho_{2};q)_{n-k}}{(q;q)_{n-k}}\left( \frac{aq} {\rho_{1}\rho_{2}}\right)^{k}\beta_{k}(a;q) \nonumber\\
&=\sum_{r=0}^{n}\frac{(\rho_{1},\rho_{2};q)_{r}}{(q;q)_{n-r}(aq;q)_{n+r}(aq/\rho_{1},aq/\rho_{2};q)_{r}}\left( \frac{aq}{\rho_{1}\rho_{2}}\right)^{r}\alpha_{r}(a;q).
\end{align}
\end{lemma}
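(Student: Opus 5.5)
The statement is Bailey's lemma, and I would prove it by direct substitution and a change of summation order. The key tool is a terminating $q$-Pfaff--Saalschütz summation.

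First substitute the defining relation \eqref{defn-BP} for $\beta_k(a;q)$ into the left side of \eqref{Bailey's lemma-1} and interchange the two finite sums. The left side becomes
\begin{align*}
\frac{1}{(aq/\rho_1,aq/\rho_2;q)_n}\sum_{r=0}^{n}\alpha_r(a;q)\sum_{k=r}^{n}\frac{(\rho_1,\rho_2;q)_k(aq/\rho_1\rho_2;q)_{n-k}}{(q;q)_{n-k}(q;q)_{k-r}(aq;q)_{k+r}}\Big(\frac{aq}{\rho_1\rho_2}\Big)^k .
\end{align*}
Matching coefficients of $\alpha_r(a;q)$, it suffices to evaluate the inner sum for each fixed $0\le r\le n$. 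Write $k=r+m$ with $0\le m\le n-r$. Then split the Pochhammer symbols:
\begin{itemize}
\item $(\rho_i;q)_{r+m}=(\rho_i;q)_r(\rho_iq^r;q)_m$;
\item $(aq;q)_{2r+m}=(aq;q)_{2r}(aq^{2r+1};q)_m$;
\item $(q;q)_{n-r-m}$ and $(aq/\rho_1\rho_2;q)_{n-r-m}$ are rewritten via $(x;q)_{N-m}=\dfrac{(x;q)_N}{(q^{1-N}/x;q)_m}\,(-q/x)^m q^{\binom{m}{2}-Nm}$ with $N=n-r$.
\end{itemize}

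After this splitting, the inner sum becomes a multiple of a terminating balanced ${}_3\phi_2$:
\begin{align*}
{}_3\phi_2\!\left(\begin{matrix}q^{-N},\ \rho_1q^r,\ \rho_2q^r\\ aq^{2r+1},\ \rho_1\rho_2q^{-N}/a\end{matrix};q,q\right).
\end{align*}
The stray powers $q^{\binom m2}$ and signs cancel between the two reversed factors. One checks that the series is Saalschützian: the product of the denominator parameters equals $q$ times the product of the numerator parameters.

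The $q$-Pfaff--Saalschütz formula evaluates this ${}_3\phi_2$ as
\begin{align*}
\frac{(aq^{r+1}/\rho_1,\ aq^{r+1}/\rho_2;q)_N}{(aq^{2r+1},\ aq/\rho_1\rho_2;q)_N}.
\end{align*}
This formula is not listed in Section \ref{sec-pre}, but it is standard. If one prefers to stay within the paper's preliminaries, it can be derived from \eqref{q-binomial} by equating coefficients in $\frac{(az;q)_\infty}{(z;q)_\infty}\cdot\frac{(bz;q)_\infty}{(abz;q)_\infty}$ in the usual way.

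Combining this evaluation with the prefactors should give exactly the summand on the right side of \eqref{Bailey's lemma-1}:
\begin{align*}
\frac{(\rho_1,\rho_2;q)_r}{(q;q)_{n-r}(aq;q)_{n+r}(aq/\rho_1,aq/\rho_2;q)_r}\Big(\frac{aq}{\rho_1\rho_2}\Big)^r\alpha_r(a;q).
\end{align*}
Three simplifications are involved:
\begin{itemize}
\item $(aq/\rho_i;q)_n=(aq/\rho_i;q)_r\,(aq^{r+1}/\rho_i;q)_{n-r}$;
\item $(aq;q)_{2r}(aq^{2r+1};q)_{n-r}=(aq;q)_{n+r}$;
\item the factor $(aq/\rho_1\rho_2;q)_{n-r}$ cancels.
\end{itemize}
This identifies the left side as $\sum_r \alpha'_r/\big((q;q)_{n-r}(aq;q)_{n+r}\big)$, which is the statement that $(\alpha'_n,\beta'_n)$ is a Bailey pair.

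The main obstacle is purely bookkeeping. One must track the powers of $q$ and the signs when reversing $(q;q)_{N-m}$ and $(aq/\rho_1\rho_2;q)_{N-m}$, and verify that the resulting ${}_3\phi_2$ is genuinely balanced with argument $q$. I would do this step carefully first, before any other simplification.
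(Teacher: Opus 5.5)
The paper does not prove this lemma; it quotes it as a standard tool with a reference to \cite{Bailey1949}, so there is no in-paper argument to compare with. Your argument is the standard proof: substitute the definition of $\beta_k$, interchange the finite sums, and evaluate the inner sum by the terminating $q$-Pfaff--Saalsch\"utz formula. It is correct. With $c=aq/\rho_1\rho_2$ and $N=n-r$, the two reversals contribute $(q/c)^m=(\rho_1\rho_2/a)^m$. This combines with $(aq/\rho_1\rho_2)^m=c^m$ to leave argument $q$. The resulting series is balanced, and the factor $(c;q)_N$ in the Saalsch\"utz value $\frac{(aq^{r+1}/\rho_1,aq^{r+1}/\rho_2;q)_N}{(aq^{2r+1},c;q)_N}$ cancels the one produced by the reversal. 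The remaining factors recombine exactly into the summand on the right of \eqref{Bailey's lemma-1}.

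There are two minor points. First, reversing $(c;q)_{N-m}$ gives a $0/0$ expression when $c=q^{-j}$ with $0\le j<N$. You should therefore do the computation for generic $\rho_1,\rho_2$ and then extend, since both sides are rational in $\rho_1,\rho_2$; the limits $\rho_i\to\infty$ used later in the paper are unaffected. Second, your aside that $q$-Pfaff--Saalsch\"utz follows from \eqref{q-binomial} by equating coefficients in $\frac{(az;q)_\infty}{(z;q)_\infty}\cdot\frac{(bz;q)_\infty}{(abz;q)_\infty}$ does not work as stated. The two natural expansions of that product give terminating ${}_2\phi_1$-type sums that differ only by $a\leftrightarrow b$, so no evaluation of a balanced ${}_3\phi_2$ comes out. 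The usual route equates coefficients of $z^N$ in Heine's transformation ${}_2\phi_1(a,b;c;q,z)=\frac{(abz/c;q)_\infty}{(z;q)_\infty}\,{}_2\phi_1(c/a,c/b;c;q,abz/c)$, expanding $\frac{(abz/c;q)_\infty}{(z;q)_\infty}$ by \eqref{q-binomial}. Alternatively, simply cite the summation formula, which is all your main argument needs.
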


Letting $\rho_{1}\rightarrow \infty$ and $\rho_{2}\rightarrow \infty$, we obtain a new Bailey pair $(\alpha_n'(a;q),\beta_n'(a;q))$ from the Bailey pair $(\alpha_n(a;q),\beta_n(a;q))$ as follows:
\begin{align}
&\left\{\begin{aligned}\alpha_{n}'(a;q)&=a^nq^{n^2}\alpha_{n}(a;q),\\ 
\beta_{n}'(a;q)&=\sum_{k=0}^{n}\frac{a^kq^{k^2}}{(q;q)_{n-k}}\beta_{k}(a;q).
\end{aligned}\right.
\label{S1}
\tag{S1}
\end{align}
If we further let $n\rightarrow\infty$, then we deduce from \eqref{Bailey's lemma-1} that
\begin{align}
&\sum_{n=0}^{\infty}a^nq^{n^2}\beta_{n}(a;q)=\frac{1}{(aq;q)_{\infty}}\sum_{n=0}^{\infty}a^nq^{n^2}\alpha_{n}(a;q).
\label{S1*}
\tag{S1*}
\end{align}

Similarly, if we let $\rho_1\rightarrow \infty$, $\rho_{2}\in \{-\sqrt{aq},-q^{1/2},-aq^{1/2},-a^{1/2}q,-a^{1/2}\}$, we obtain the formulas \cite[(S2)--(S6)]{Bressoud2000} which are not needed and hence omitted here. If we further let $n\rightarrow \infty$ for the cases $\rho_{2}\in \{-\sqrt{aq},-aq^{1/2},-a^{1/2}q,-a^{1/2}\}$, we obtain the following identities:
\begin{align}
&\sum_{n=0}^{\infty}a^{n/2}q^{n^2/2}(-\sqrt{aq};q)_{n}\beta_{n}(a;q)=\frac{(-\sqrt{aq};q)_{\infty}}{(aq;q)_{\infty}}\sum_{n=0}^{\infty}a^{n/2}q^{n^2/2}\alpha_{n}(a;q),
\label{S2*}
\tag{S2*}\\[1.1ex]
&\sum_{n=0}^{\infty}q^{n^2/2}(-aq^{1/2};q)_{n}\beta_{n}(a;q)=\frac{(-q^{1/2};q)_{\infty}}{(aq;q)_{\infty}}\sum_{n=0}^{\infty}\frac{(-aq^{1/2};q)_{n}}{(-q^{1/2};q)_{n}}q^{n^2/2}\alpha_{n}(a;q), 
\label{S4*}
\tag{S4*}\\[1.1ex]
&\sum_{n=0}^{\infty}a^{n/2}q^{(n^2-n)/2}(-a^{1/2}q;q)_{n}\beta_{n}(a;q) \nonumber \\
&=\frac{(-a^{1/2};q)_{\infty}}{(aq;q)_{\infty}}\sum_{n=0}^{\infty}\frac{(-a^{1/2}q;q)_{n}}{(-a^{1/2};q)_{n}}a^{n/2}q^{(n^2-n)/2}\alpha_{n}(a;q), 
\label{S5*}
\tag{S5*}\\[1.1ex]
&\sum_{n=0}^{\infty}a^{n/2}q^{(n^2+n)/2}(-a^{1/2};q)_{n}\beta_{n}(a;q)\nonumber \\
&=\frac{(-a^{1/2}q;q)_{\infty}}{(aq;q)_{\infty}}\sum_{n=0}^{\infty}\frac{(-a^{1/2};q)_{n}}{(-a^{1/2}q;q)_{n}}a^{n/2}q^{(n^2+n)/2}\alpha_{n}(a;q).
\label{S6*}
\tag{S6*}
\end{align}

The following lemma allows us to change the parameter of Bailey pairs.
\begin{lemma}
If $(\alpha_{n}(a;q), \beta_{n}(a;q))$ is a Bailey pair relative to $a$, then   
\begin{align}\label{Lovejoy-a-aq-1}
\alpha_{n}'(aq;q)&=\frac{(1-aq^{2n+1})a^{n}q^{n^2}}{1-aq}\sum_{r=0}^{n}a^{-r}q^{-r^2}\alpha_{r}(a;q), \nonumber \\
\beta_{n}'(aq;q)&=\beta_{n}(a;q).   
\end{align}
is a Bailey pair relative to $aq$. Moreover,
\begin{align}\label{McLaughlin-a-a/q}
\alpha_{0}'(a/q;q)&=\alpha_{0}(a;q), \quad \alpha_{n}'(a/q;q)=(1-a)\left(\frac{\alpha_{n}(a;q)}{1-aq^{2n}}-\frac{aq^{2n-2}\alpha_{n-1}(a;q)}{1-aq^{2n-2}} \right), \nonumber\\
\beta_{n}'(a/q;q)&=\beta_{n}(a;q)    
\end{align}
is a Bailey pair relative to $a/q$.
\end{lemma}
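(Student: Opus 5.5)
Both parts can be checked directly from the defining relation \eqref{defn-BP}. Substitute the proposed $\alpha'$ into the right-hand side, interchange the order of summation, and show that the coefficient of each $\alpha_r(a;q)$ equals $1/((q;q)_{n-r}(aq;q)_{n+r})$. That is exactly the coefficient in \eqref{defn-BP} for the original pair, so the right-hand side reduces to $\beta_n(a;q)$. In both cases the key computation is a two-term combination that collapses, either by telescoping or by a one-step cancellation.

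\emph{The shift $a\to aq$.} We must show $\beta_n(a;q)=\sum_{k=0}^n \alpha_k'(aq;q)/((q;q)_{n-k}(aq^2;q)_{n+k})$. Use $(1-aq)(aq^2;q)_{n+k}=(aq;q)_{n+k+1}$ and interchange the sums over $k$ and $r\le k$. It then suffices to prove, for each $0\le r\le n$,
\begin{align*}
\sum_{k=r}^{n}\frac{(1-aq^{2k+1})a^kq^{k^2}}{(q;q)_{n-k}(aq;q)_{n+k+1}}=\frac{a^rq^{r^2}}{(q;q)_{n-r}(aq;q)_{n+r}}.
\end{align*}
Set $T_k=a^kq^{k^2}/((q;q)_{n-k}(aq;q)_{n+k})$ for $k\le n$ and $T_{n+1}=0$. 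Put $T_k-T_{k+1}$ over the common denominator $(q;q)_{n-k}(aq;q)_{n+k+1}$. The numerator is
\begin{align*}
a^kq^{k^2}\big[(1-aq^{n+k+1})-aq^{2k+1}(1-q^{n-k})\big]=a^kq^{k^2}(1-aq^{2k+1}).
\end{align*}
So the summand equals $T_k-T_{k+1}$, and the sum telescopes to $T_r$, which is the claimed right-hand side.

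\emph{The shift $a\to a/q$.} Here we must show $\sum_{k=0}^n\alpha_k'(a/q;q)/((q;q)_{n-k}(a;q)_{n+k})=\beta_n(a;q)$. Each $\alpha_r(a;q)$ appears in at most two terms: with index $k=r$, and with index $k=r+1$ through the $\alpha_{n-1}$ part of the formula. For $r<n$ these two contributions combine over the denominator $(1-aq^{2r})(q;q)_{n-r}(a;q)_{n+r+1}$ to give
\begin{align*}
\frac{(1-a)\alpha_r(a;q)}{(1-aq^{2r})(q;q)_{n-r}(a;q)_{n+r+1}}\big[(1-aq^{n+r})-aq^{2r}(1-q^{n-r})\big]=\frac{\alpha_r(a;q)}{(q;q)_{n-r}(aq;q)_{n+r}},
\end{align*}
using $(1-a)/(a;q)_{n+r+1}=1/(aq;q)_{n+r}$. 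For $r=n$ only the $k=n$ term is present, and the same expression results directly. For $r=0$ the contribution from $k=0$ is $\alpha_0$ itself rather than $(1-a)\alpha_0/(1-a)$, but these agree, so the same combination applies. Hence the whole sum equals $\sum_r \alpha_r(a;q)/((q;q)_{n-r}(aq;q)_{n+r})=\beta_n(a;q)$.

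The argument is purely algebraic. The only delicate points are the boundary terms: $k=n$, where $T_{n+1}=0$ and $1/(q;q)_{-1}=0$, and the special normalization of $\alpha_0'$. I expect the bookkeeping of these edge cases, rather than any conceptual step, to be the main source of potential error.
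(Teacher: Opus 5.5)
Your proposal is correct. The telescoping identity
$T_k-T_{k+1}=a^kq^{k^2}(1-aq^{2k+1})/\big((q;q)_{n-k}(aq;q)_{n+k+1}\big)$ holds, and so does the two-term collapse $(1-aq^{n+r})-aq^{2r}(1-q^{n-r})=1-aq^{2r}$. Your handling of the edge cases $k=n$, $r=0$ and $r=n$ is also right.

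The paper does not compute anything here. It takes the first assertion as the $b\to 0$ case of Lovejoy's Bailey-lattice result, which carries an extra parameter $b$, and quotes the second from McLaughlin. So the routes genuinely differ. The citation places the $a\to aq$ map inside a one-parameter family of such maps. Your coefficient-matching argument is self-contained and uses nothing beyond \eqref{defn-BP}.

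You could avoid the second computation altogether, because the two maps are mutually inverse. Put $S_n=\sum_{r=0}^n a^{-r}q^{-r^2}\alpha_r(a;q)$ and feed $\alpha_n'(aq;q)$ into the second formula with $a$ replaced by $aq$. The result is $a^nq^{n^2}(S_n-S_{n-1})=\alpha_n(a;q)$, and for $n=0$ it is $\alpha_0(a;q)$. The relation \eqref{defn-BP} is triangular in $\alpha$ with nonvanishing diagonal $1/(aq;q)_{2n}$, so the $\alpha$-side of a Bailey pair is uniquely determined by $\beta$ (generic $a$). Hence the pair relative to $a/q$ with the given $\beta$ must be the one in the second assertion. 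That assertion therefore follows from the first without a separate verification.
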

The first assertion follows from the $b\rightarrow 0$ case of \cite[(2.4) and (2.5)]{Lovejoy2004}.
The second assertion can be found in \cite[(13.3)]{McLaughlin2018}.

\section{Proofs of Theorems \ref{thm-rank3}--\ref{thm-rank4-new}}\label{sec-proof}
We will follow the lines in \cite[pp.~48--50]{Zagier} and \cite[Proof of Lemma 4.1]{Wang-rank3} to give proofs of Theorems \ref{thm-rank3} and \ref{thm-rank3-2}.
\begin{proof}[Proof of Theorem \ref{thm-rank3}]
Suppose that $\alpha, \nu,h \in \mathbb{Q}$, $A_1>0$ and $B_1 \in \mathbb{R}$ so that the sum $S(q)$ defined below converges absolutely. Then we have
\begin{align}
&S(q):=\sum_{i,j,k\geq 0} \frac{q^{\frac{1}{2}(\alpha h^2+A_1)i^2+\frac{1}{2}\alpha j^2+\frac{1}{2}\alpha k^2+\alpha h ij-\alpha h ik+(1-\alpha)jk+(\alpha \nu h+B_1)i+\alpha \nu j-\alpha \nu k+\frac{1}{2}\alpha\nu^2}}{(q;q)_i(q;q)_j(q;q)_k}\nonumber \\
&=\sum_{i\geq 0} \frac{q^{\frac{A_1}{2}i^2+B_1i}}{(q;q)_i}\sum_{j,k\geq 0} \frac{q^{\frac{\alpha}{2}(hi+j-k+\nu)^2}\cdot q^{jk}}{(q;q)_j(q;q)_k} \nonumber \\
&=\sum_{i\geq 0}  \frac{q^{\frac{A_1}{2}i^2+B_1i}}{(q;q)_i} \sum_{n=-\infty}^\infty q^{\frac{\alpha}{2}(hi+n+\nu)^2} \sum_{j-k=n} \frac{q^{jk}}{(q;q)_j(q;q)_k} \nonumber \\
&=\frac{1}{(q;q)_\infty} \sum_{i\geq 0}\frac{q^{\frac{A_1}{2}i^2+B_1i}}{(q;q)_i} \sum_{n \in \mathbb{Z}} q^{\frac{\alpha}{2}(hi+n+\nu)^2}. \label{Sq-start}
\end{align}
Here for the last equality we used the Durfee rectangle identity (see e.g. \cite[(27)]{Zagier} or \cite[Corollary 2.6]{Andrews1998}): for any fixed integer $n$,
\begin{align}\label{Durfee}
\sum_{j-k=n} \frac{q^{jk}}{(q)_j(q)_k}=\frac{1}{(q;q)_\infty}.
\end{align}

When $h\in \frac{1}{2}+\mathbb{Z}$ we have
\begin{align}\label{Sq-reduce}
S(q)&=\frac{1}{(q;q)_\infty} \Big(\sum_{n\in \mathbb{Z}+\nu} q^{\frac{\alpha}{2}n^2}\sum_{i\geq 0} \frac{q^{2A_1i^2+2B_1i}}{(q;q)_{2i}} \nonumber \\
&\qquad +\sum_{n\in \mathbb{Z}+\nu+\frac{1}{2}} q^{\frac{\alpha}{2}n^2}\sum_{i\geq 0} \frac{q^{2A_1i^2+2(A_1+B_1)i+\frac{1}{2}A_1+B_1}}{(q;q)_{2i+1}} \Big).
\end{align}

When $(A_1,B_1)=(1,-1/2)$, using \eqref{S. 85} and \eqref{S. 9} we deduce that
\begin{align}
S(q)=\frac{J_2}{J_1^2} \Big(\sum_{n\in \mathbb{Z}+\nu} q^{\frac{\alpha}{2}n^2} +\sum_{n\in \mathbb{Z}+\nu+\frac{1}{2}} q^{\frac{\alpha}{2}n^2}\Big)=\frac{J_2}{J_1^2}\sum_{n\in \mathbb{Z}} q^{\frac{\alpha}{8}(n+2\nu)^2}.
\end{align}
This proves \eqref{exam1-1}.

Similarly, when $(A_1,B_1)=(1,0)$, using  \eqref{S. 39} and \eqref{S. 38}  in \eqref{Sq-start} we obtain \eqref{exam1-2}.

It remains to check the convergence conditions. One sufficient condition to ensure that $S(q)$ converges absolutely is to require the matrix 
$$A=\begin{pmatrix}  \alpha h^2+1 & \alpha h & -\alpha h \\  \alpha h & \alpha & 1-\alpha \\  -\alpha h & 1-\alpha & \alpha \end{pmatrix}$$
to be positive definite. This then requires $\det A=(2-h^2)\alpha-1>0$ and $\alpha>0$. Hence $2-h^2>0$ and we must have $h=\pm \frac{1}{2}$ and $\alpha>\frac{1}{2-h^2}=4/7$. It is easy to check that $A$ is positive definite  under such conditions.
\end{proof}

\begin{proof}[Proof of Theorem \ref{thm-rank3-2}]
We first assume that $h\in \frac{1}{2}+\mathbb{Z}$. When $(A_1,B_1)=(1/2,0)$, using \eqref{S. 98} and \eqref{S. 94} in \eqref{Sq-reduce} we obtain \eqref{add-thm-id-3}.

When $(A_1,B_1)=(1/2,1/2)$, using \eqref{S. 99} and \eqref{S. 96} in \eqref{Sq-reduce} we obtain \eqref{add-thm-id-4}.

As before, to ensure that $S(q)$ converges absolutely, we consider the matrix associated with the Nahm sum:
\begin{align}\label{rank3-A-1}
A=\begin{pmatrix}  \alpha h^2+\frac{1}{2} & \alpha h & -\alpha h \\  \alpha h & \alpha & 1-\alpha \\  -\alpha h & 1-\alpha & \alpha \end{pmatrix}.
\end{align}
To make it positive definite, we must have $\det A=\alpha(1-h^2)-\frac{1}{2}>0$ and $\alpha>0$. Hence  $|h|<1$ and $\alpha>\frac{1}{2(1-h^2)}$. Therefore, we have $h=\pm \frac{1}{2}$ and $\alpha>\frac{2}{3}$. It is easy to check that $A$ is positive definite under such conditions. 
\end{proof}

We now present a proof for the modularity of the modular triples $(A,B,C)$ in Table \ref{tab-new}.  For any series $f(z_1,\dots,z_k)=\sum_{n_1,n_2,\dots,n_k\in \mathbb{Z}}a_{n_1n_2\dots n_k}z_1^{n_1}z_2^{n_2}\cdots z_k^{n_k}$  in the variables $z_1,z_2,\dots,z_k$,  we define the constant term extracting operator $\mathrm{CT}$ as follows:
\begin{align}
    \mathrm{CT}_{z_1,\dots,z_n} f(z_1,z_2,\dots,z_n):=a_{00\dots,0}.
\end{align}
We will utilize the constant term method, which has been used frequently to prove Nahm sum identities in works such as \cite{Cao-Wang2024,Cao-Wang2025,MW24,Shi-Wang,Wang-rank2,Wang-rank3}. It starts by expressing the Nahm sum in consideration as the constant term of certain multi-sums, and then we need to transform these sums to different forms (such as infinite products). By some delicate manipulations, in certain cases we may eventually extract the constant term and obtain the desired identity for the original Nahm sum.

Compared with the other proofs in this paper, the proof of Theorem~\ref{thm-rank4-new} is quite tricky. We first express the Nahm sum as a constant term with respect to three variables. Then we simplify it using \eqref{q-binomial}, \eqref{Jacobi} and \eqref{Gauss}.
\begin{proof}[Proof of Theorem \ref{thm-rank4-new}]
By (\ref{Euler1}) and (\ref{Jacobi}) we have
\begin{align}\label{new-1-1}
&F(u,v,w,t;q^2)=
\sum_{i,j,k,l\geq 0} \frac{u^iv^jw^kt^lq^{(2i-2j+k-l)^2+(i-j)^2+(i-l)^2+j^2+k^2}}{(q^2;q^2)_i(q^2;q^2)_j(q^2;q^2)_k(q^2;q^2)_l} \nonumber\\ 
&=\CT_{x}\CT_{y}\CT_{z} \sum_{i\geq 0}\frac{(ux^2yz)^i}{(q^2;q^2)_i}\sum_{j\geq 0}\frac{(v/(x^2y))^jq^{j^2}}{(q^2;q^2)_j}\sum_{k\geq 0}\frac{(wx)^kq^{k^2}}{(q^2;q^2)_k}\sum_{l\geq 0}\frac{(t/(xz))^l}{(q^2;q^2)_l} \nonumber\\ 
&\quad \times \sum_{m=-\infty}^{\infty}x^{-m}q^{m^2}\sum_{n=-\infty}^{\infty}y^{-n}q^{n^2}\sum_{s=-\infty}^{\infty}z^{-s}q^{s^2} \nonumber\\ 
&=\CT_{x}\CT_{y} (-qv/(x^2y),-qwx;q^2)_{\infty}(-qx,-q/x,q^2;q^2)_{\infty}(-qy,-q/y,q^2;q^2)_{\infty} \nonumber\\ 
&\quad\times \CT_{z} \frac{(-qz,-q/z,q^2;q^2)_{\infty}}{(ux^2yz,t/(xz);q^2)_{\infty}} \nonumber\\ 
&=\CT_{x}\CT_{y} (-qv/(x^2y),-qwx;q^2)_{\infty}(-qx,-q/x,q^2;q^2)_{\infty}(-qy,-q/y,q^2;q^2)_{\infty} \nonumber\\
&\quad\times (q^2;q^2)_{\infty}\CT_{z} \sum_{i\geq 0}\frac{(-q/(ux^2y);q^2)_i}{(q^2;q^2)_i}(ux^2yz)^i \nonumber \\
&\quad \times \sum_{j\geq 0}\frac{(-qx/t;q^2)_j}{(q^2;q^2)_j}(t/(xz))^j\quad \text{(by (\ref{q-binomial}))} \nonumber\\
&=\CT_{x}\CT_{y} (-qv/(x^2y),-qwx;q^2)_{\infty}(-qx,-q/x,q^2;q^2)_{\infty}(-qy,-q/y,q^2;q^2)_{\infty} \nonumber\\
&\quad\times (q^2;q^2)_{\infty}\sum_{i\geq 0}\frac{(-q/(ux^2y),-qx/t;q^2)_i}{(q^2,q^2;q^2)_i}(utxy)^i \nonumber\\
&=\CT_{x}\CT_{y} (-qv/(x^2y),-qwx;q^2)_{\infty}(-qx,-q/x,q^2;q^2)_{\infty}(-qy,-q/y,q^2;q^2)_{\infty} \nonumber\\
&\quad\times \frac{(-qux^2y,-qt/x;q^2)_{\infty}}{(utxy;q^2)_{\infty}} \quad \text{(by (\ref{Gauss}))} \nonumber\\
&=\CT_{x}\CT_{y} (-qwx,-qt/x,-qx,-q/x,q^2;q^2)_{\infty} \nonumber \\
&\qquad \times \frac{(-qux^2y,-qv/(x^2y),-qy,-q/y,q^2;q^2)_{\infty}}{(utxy;q^2)_{\infty}}.
\end{align}

Letting $v=1/u$ in \eqref{new-1-1}, we have
\begin{align}\label{new-1-2}
&F(u,1/u,w,t;q^2)= \CT_{x}(-qwx,-qt/x,-qx,-q/x,q^2;q^2)_{\infty} \nonumber\\ 
&\quad\times \CT_{y}\frac{(-qux^2y,-q/(ux^2y),-qy,-q/y,q^2;q^2)_{\infty}}{(utxy;q^2)_{\infty}} \nonumber\\ 
&=\CT_{x} (-qwx,-qt/x,-qx,-q/x;q^2)_{\infty} \nonumber\\ 
&\quad\times \CT_{y} \sum_{i\geq 0}\frac{(utxy)^i}{(q^2;q^2)_i}\sum_{m=-\infty}^{\infty}(ux^2y)^{-m}q^{m^2}\sum_{n=-\infty}^{\infty}y^{-n}q^{n^2}
\nonumber\\ 
&=\CT_{x} (-qwx,-qt/x,-qx,-q/x;q^2)_{\infty} \nonumber\\ 
&\quad\times \sum_{i\geq 0}\frac{(utx)^i}{(q^2;q^2)_i}\sum_{m=-\infty}^{\infty}(ux^2)^{-m}q^{2m^2+i^2-2im} \nonumber\\
&=\CT_{x} (-qwx,-qt/x,-qx,-q/x;q^2)_{\infty} \Big( \sum_{i\geq 0}\frac{u^it^{2i}q^{2i^2}}{(q^2;q^2)_{2i}}\sum_{m=-\infty}^{\infty}(ux^2)^{i-m}q^{2(m-i)^2} \nonumber \\
&\qquad +\sum_{i\geq 0}\frac{u^{i+1}t^{2i+1}q^{2i^2+2i+1}}{(q^2;q^2)_{2i+1}}\sum_{m=-\infty}^{\infty}x(ux^2)^{i-m}q^{2(m-i)^2+2(i-m)}\Big) \nonumber\\ 
&=\CT_{x} (-qwx,-qt/x,-qx,-q/x;q^2)_{\infty} \Big( \sum_{i\geq 0}\frac{u^it^{2i}q^{2i^2}}{(q^2;q^2)_{2i}}\sum_{m=-\infty}^{\infty}(ux^2)^{m}q^{2m^2}  \nonumber \\
&\qquad +\sum_{i\geq 0}\frac{u^{i+1}t^{2i+1}q^{2i^2+2i+1}}{(q^2;q^2)_{2i+1}}\sum_{m=-\infty}^{\infty}u^mx^{2m+1}q^{2m^2+2m}\Big). 
\end{align}
Here for the penultimate line we split the sum into two parts according to the parity of $i$.

Letting $w=1/t$ in \eqref{new-1-2}, we have
\begin{align}\label{new-1-3}
&F(u,1/u,1/t,t;q^2)\nonumber\\ 
&=\CT_{x} (-qx/t,-qt/x,-qx,-q/x;q^2)_{\infty} \times\Big( \sum_{i\geq 0}\frac{u^it^{2i}q^{2i^2}}{(q^2;q^2)_{2i}}\sum_{m=-\infty}^{\infty}(ux^2)^{m}q^{2m^2}  \nonumber\\ 
&\quad +\sum_{i\geq 0}\frac{u^{i+1}t^{2i+1}q^{2i^2+2i+1}}{(q^2;q^2)_{2i+1}}\sum_{m=-\infty}^{\infty}u^mx^{2m+1}q^{2m^2+2m}\Big) \nonumber\\ 
&=\frac{1}{(q^2;q^2)_{\infty}^{2}}\sum_{i\geq 0}\frac{u^it^{2i}q^{2i^2}}{(q^2;q^2)_{2i}}\CT_{x}\sum_{m,n,s=-\infty}^{\infty}(ux^2)^{m}q^{2m^2}(x/t)^{-n}q^{n^2}x^{-s}q^{s^2} \nonumber\\ 
&\quad+\frac{1}{(q^2;q^2)_{\infty}^{2}}\sum_{i\geq 0}\frac{u^{i+1}t^{2i+1}q^{2i^2+2i+1}}{(q^2;q^2)_{2i+1}}\CT_{x}\sum_{m,n,s=-\infty}^{\infty}u^mx^{2m+1}q^{2m^2+2m}(x/t)^{-n}q^{n^2}x^{-s}q^{s^2} \nonumber\\ 
&=\frac{1}{(q^2;q^2)_{\infty}^{2}}\sum_{i\geq 0}\frac{u^it^{2i}q^{2i^2}}{(q^2;q^2)_{2i}}\sum_{m,n=-\infty}^{\infty}u^{m}t^{n}q^{2(m-n)^2+4m^2} \nonumber\\ 
&\quad+\frac{1}{(q^2;q^2)_{\infty}^{2}}\sum_{i\geq 0}\frac{u^{i+1}t^{2i+1}q^{2i^2+2i+1}}{(q^2;q^2)_{2i+1}}\sum_{m,n=-\infty}^{\infty}u^{m}t^{n}q^{2(m-n)^2+2(m-n)+4m^2+4m+1}  \nonumber\\ 
&=\frac{1}{(q^2;q^2)_{\infty}^{2}}\sum_{i\geq 0}\frac{u^it^{2i}q^{2i^2}}{(q^2;q^2)_{2i}}\sum_{m=-\infty}^{\infty}(ut)^{m}q^{4m^2}\sum_{n=-\infty}^{\infty}t^{-n}q^{2n^2} \nonumber\\ 
&\quad+\frac{1}{(q^2;q^2)_{\infty}^{2}}\sum_{i\geq 0}\frac{u^{i+1}t^{2i+1}q^{2i^2+2i+1}}{(q^2;q^2)_{2i+1}}\sum_{m=-\infty}^{\infty}(ut)^{m}q^{4m^2+4m+1}\sum_{n=-\infty}^{\infty}t^{-n}q^{2n^2+2n} \nonumber\\ 
&=\frac{(-q^4ut,-q^4/(ut),q^8;q^8)_{\infty}(-q^2t,-q^2/t,q^4;q^4)_{\infty}}{(q^2;q^2)_{\infty}^{2}}\sum_{i\geq 0}\frac{u^it^{2i}q^{2i^2}}{(q^2;q^2)_{2i}} \nonumber\\ 
&\quad+\frac{(-q^8ut,-1/(ut),q^8;q^8)_{\infty}(-q^4/t,-t,q^4;q^4)_{\infty}}{(q^2;q^2)_{\infty}^{2}}\sum_{i\geq 0}\frac{u^{i+1}t^{2i+1}q^{2i^2+2i+2}}{(q^2;q^2)_{2i+1}}.
\end{align}

(1) Setting $(u,t)=(1,1)$ in \eqref{new-1-3},  we have
\begin{align}
&F(1,1,1,1;q^2)=\frac{(-q^4,-q^4,q^8;q^8)_{\infty}(-q^2,-q^2,q^4;q^4)_{\infty}}{(q^2;q^2)_{\infty}^{2}}\sum_{i\geq 0}\frac{q^{2i^2}}{(q^2;q^2)_{2i}} \nonumber\\
&\quad +\frac{(-q^8,-1,q^8;q^8)_{\infty}(-q^4,-1,q^4;q^4)_{\infty}}{(q^2;q^2)_{\infty}^{2}}\sum_{i\geq 0}\frac{q^{2i^2+2i+2}}{(q^2;q^2)_{2i+1}}.
\end{align}
Substituting \eqref{S. 98}  and \eqref{S. 94} into it, we obtain \eqref{thm4-1}.

(2)  Setting $(u,t)=(1,q)$ in \eqref{new-1-3},  we have
\begin{align}
&F(1,1,q^{-1},q;q^2)=\frac{(-q^5,-q^3,q^8;q^8)_{\infty}(-q^3,-q,q^4;q^4)_{\infty}}{(q^2;q^2)_{\infty}^{2}}\sum_{i\geq 0}\frac{q^{2i^2+2i}}{(q^2;q^2)_{2i}} \nonumber\\
&\quad +\frac{(-q^9,-1/q,q^8;q^8)_{\infty}(-q^3,-q,q^4;q^4)_{\infty}}{(q^2;q^2)_{\infty}^{2}}\sum_{i\geq 0}\frac{q^{2i^2+4i+3}}{(q^2;q^2)_{2i+1}}. 
\end{align}
Substituting \eqref{S. 99}  and \eqref{S. 96} into it, we obtain \eqref{thm4-2}.

(3) Setting $(u,t)=(q^2,1)$ in  \eqref{new-1-3}, we have
\begin{align}
&F(q^2,q^{-2},1,1;q^2)=\frac{(-q^6,-q^2,q^8;q^8)_{\infty}(-q^2,-q^2,q^4;q^4)_{\infty}}{(q^2;q^2)_{\infty}^{2}}\sum_{i\geq 0}\frac{q^{2i^2+2i}}{(q^2;q^2)_{2i}} \nonumber\\
&\quad +\frac{(-q^{10},-1/q^2,q^8;q^8)_{\infty}(-q^4,-1,q^4;q^4)_{\infty}}{(q^2;q^2)_{\infty}^{2}}\sum_{i\geq 0}\frac{q^{2i^2+4i+4}}{(q^2;q^2)_{2i+1}}. 
\end{align}
Substituting \eqref{S. 99} and \eqref{S. 96}  into it, we obtain \eqref{thm4-3}.
\end{proof}

\section{Lift-dual of the Milas--Wang example}\label{sec-MW}
As mentioned in Section \ref{sec-intro}, only the lifts of \eqref{MW-mt} through $\mathcal{L}_1$ and $\mathcal{L}_3$  are meaningful. Surprisingly, we find that the modular triples lifted from \eqref{MW-mt} using $\mathcal{L}_1$ can be combined and extended to two general triples with parameters. We list these general triples and their duals in  Table \ref{tab:MW-DL1} where $\nu\in\mathbb{Q}$ and $c\in\{-1/2,0,1/2\}$.

\begin{table}[htbp]
\centering
\caption{The parameterized lift and lift-dual via $\mathcal{L}_{1}$ of $(A,B,C)$ in \eqref{MW-mt}}
\label{tab:MW-DL1}
\begin{tabular}{|c|cc|c|cc|}
\hline \xrowht{40pt}
   $\mathcal{L}_1(A)$
     &  \multicolumn{2}{c|}{$\left(\begin{smallmatrix} 2 & -1 & 0 & -1 \\ -1 & 2 & 0 & 1 \\ 0 & 0 & 1 & 0 \\ -1 & 1 & 0 & 1 \end{smallmatrix}\right)$}
   &$\mathcal{DL}_1(A)$
      & \multicolumn{2}{c|}{$\left(\begin{smallmatrix} 1 & 0 & 0 & 1 \\ 0 & 1 & 0 & -1 \\ 0 & 0 & 1 & 0 \\ 1 & -1 & 0 & 3 \end{smallmatrix}\right)$} \\
\hline \xrowht{34pt}
   $\mathcal{L}_1(B)$
     &  $\left(\begin{smallmatrix} -2\nu \\ 2\nu \\ c \\ \nu 
     \end{smallmatrix}\right)$ & $\left(\begin{smallmatrix} -2\nu \\ 2\nu \\ c \\ \nu+1/2 \end{smallmatrix}\right)$ 
    & $\mathcal{DL}_1(B)$
      & $\left(\begin{smallmatrix} \nu \\ -\nu \\ c \\ \nu  \end{smallmatrix}\right)$ & $\left(\begin{smallmatrix} \nu \\ -\nu \\ c \\ \nu+1 \end{smallmatrix}\right)$\\
\xrowht{12pt}
     $\mathcal{L}_1(C)$
     & $\frac{|c|}{8}-\frac{7}{80}+\nu^2$ & $\frac{|c|}{8}-\frac{3}{80}+\nu^2$
   & $\mathcal{DL}_1(C)$
      & $\frac{|c|}{8}-\frac{19}{240}+\frac{\nu^2}{2}$ & $\frac{|c|}{8}+\frac{29}{240}+\frac{\nu^2}{2}$ \\
    \hline
\end{tabular}
\end{table}

We establish the following identities to confirm the modularity of the triples $\mathcal{L}_1(A,B,C)$ in Table \ref{tab:MW-DL1}.
\begin{theorem}\label{thm-MW-L1}
Let
\begin{align}
F(u,v,w,t;q):=\sum_{i,j,k,l\geq 0}\frac{u^{i}v^{j}w^{k}t^{l}q^{i^2+j^2+\frac{1}{2}k^2+\frac{1}{2}l^2-ij-il+jl}}{(q;q)_{i}(q;q)_{j}(q;q)_{k}(q;q)_{l}}.   
\end{align}
We have
\begin{align}
F(q^{-2\nu},q^{2\nu},q^{c},q^{\nu};q)&=(-q^{c+\frac{1}{2}};q)_{\infty}\Big(\frac{J_{2,10}J_{6,20}\overline{J}_{1-2\nu,2}}{J_{1}^{2}J_{20}}+q^{\frac{1}{2}-\nu}\frac{J_{3,10}J_{4,20}\overline{J}_{2\nu,2}}{J_{1}^{2}J_{20}}\Big), \label{Dex6-L1-1} \\
F(q^{-2\nu},q^{2\nu},q^{c},q^{\nu+\frac{1}{2}};q)&=(-q^{c+\frac{1}{2}};q)_{\infty}\Big(\frac{J_{1,10}J_{8,20}\overline{J}_{1-2\nu,2}}{J_{1}^{2}J_{20}}+q^{1-\nu}\frac{J_{4,10}J_{2,20}\overline{J}_{2\nu,2}}{J_{1}^{2}J_{20}}\Big). \label{Dex6-L1-2}
\end{align}
\end{theorem}
\begin{proof}
Summing over $k$ using \eqref{Euler2}, we have
\begin{align}\label{Dex6-L1}
F(u,v,w,t;q)=(-wq^{1/2};q)_{\infty}\sum_{i,j,l\geq 0}\frac{u^{i}v^{j}t^{l}q^{i^2+j^2+\frac{1}{2}l^2-ij-il+jl}}{(q;q)_{i}(q;q)_{j}(q;q)_{l}}.    
\end{align}
Substituting \eqref{add-thm-id-3} and \eqref{add-thm-id-4} with $\alpha=2$ and $h=1/2$ into \eqref{Dex6-L1}, we obtain \eqref{Dex6-L1-1} and \eqref{Dex6-L1-2}, respectively.
\end{proof}

We now establish the following identities to confirm the modularity of the triples $\mathcal{DL}_1(A,B,C)$ in Table \ref{tab:MW-DL1}.
\begin{theorem}\label{T-MW-DL1-dual}
Let
\begin{align}
F(u,v,w,t;q):=\sum_{i,j,k,l\geq 0}\frac{u^{i}v^{j}w^{k}t^{l}q^{(i^2+j^2+k^2+3l^2)/2+il-jl}}{(q;q)_{i}(q;q)_{j}(q;q)_{k}(q;q)_{l}}.   
\end{align}
We have
\begin{align}
F(q^{\nu},q^{-\nu},q^{c},q^{\nu};q)&=\frac{(-q^{c+\frac{1}{2}};q)_{\infty}J_{5}\overline{J}_{\frac{1}{2}-\nu,1}}{J_{1}J_{1,5}}, \label{Dex6-L1-D1} \\
F(q^{\nu},q^{-\nu},q^{c},q^{\nu+1};q)&=\frac{(-q^{c+\frac{1}{2}};q)_{\infty}J_{5}\overline{J}_{\frac{1}{2}-\nu,1}}{J_{1}J_{2,5}}. \label{Dex6-L1-D2}
\end{align}
\end{theorem}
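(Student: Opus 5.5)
The plan is to sum out the variables one at a time with Euler's identities and reduce to the Rogers--Ramanujan identities. Three observations drive it. The variable $k$ is decoupled from the others. For fixed $l$, the variables $i$ and $j$ enter only linearly through $il$ and $-jl$. The specialisation $u=q^{\nu}$, $v=q^{-\nu}$ then makes the $i$- and $j$-sums combine into a theta product whose dependence on $l$ is an explicit monomial.

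\emph{Step 1 (the $k$-sum).} Since $w^kq^{k^2/2}=(wq^{1/2})^kq^{(k^2-k)/2}$, summing over $k$ with \eqref{Euler2} gives the factor $(-wq^{1/2};q)_\infty$. At $w=q^c$ this is $(-q^{c+\frac12};q)_\infty$, exactly as in Theorem \ref{thm-MW-L1}.

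\emph{Step 2 (the $i$- and $j$-sums).} Fix $l$ and apply \eqref{Euler2} again:
\begin{align*}
\sum_{i\ge0}\frac{u^iq^{i^2/2+il}}{(q;q)_i}=(-uq^{l+\frac12};q)_\infty,\qquad \sum_{j\ge0}\frac{v^jq^{j^2/2-jl}}{(q;q)_j}=(-vq^{\frac12-l};q)_\infty .
\end{align*}
With $u=q^\nu$ and $v=q^{-\nu}$, the product is $(-zq^{1/2},-q^{1/2}/z;q)_\infty$ with $z=q^{\nu+l}$.

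\emph{Step 3 (removing the $l$-dependence).} By the Jacobi triple product \eqref{Jacobi},
\begin{align*}
(-zq^{1/2},-q^{1/2}/z,q;q)_\infty=\sum_{n\in\mathbb{Z}} z^nq^{n^2/2}.
\end{align*}
Replacing $z$ by $zq^l$ with $l\in\mathbb{Z}$ and re-indexing $n\mapsto n-l$ multiplies the theta series by $z^{-l}q^{-l^2/2}$. Hence
\begin{align*}
(-q^{\nu+l+\frac12},-q^{\frac12-\nu-l};q)_\infty=q^{-\nu l-l^2/2}(-q^{\frac12+\nu},-q^{\frac12-\nu};q)_\infty=q^{-\nu l-l^2/2}\frac{\overline{J}_{\frac12-\nu,1}}{J_1}.
\end{align*}

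\emph{Step 4 (the $l$-sum).} Substituting into the remaining sum leaves
\begin{align*}
\sum_{l\ge0}\frac{t^l q^{\frac32 l^2-\nu l-\frac12 l^2}}{(q;q)_l}=\sum_{l\ge0}\frac{(tq^{-\nu})^lq^{l^2}}{(q;q)_l}.
\end{align*}
For $t=q^\nu$ this is the first Rogers--Ramanujan sum $\sum_{l\ge0} q^{l^2}/(q;q)_l=J_5/J_{1,5}$. For $t=q^{\nu+1}$ it is the second, $\sum_{l\ge0} q^{l^2+l}/(q;q)_l=J_5/J_{2,5}$. Both identities are classical (they are in Slater's list, like those quoted in Section \ref{sec-pre}). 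Multiplying the three factors gives \eqref{Dex6-L1-D1} and \eqref{Dex6-L1-D2}.

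The main obstacle is only the bookkeeping in Step 3: checking the sign and exponent of the quasi-periodicity factor $z^{-l}q^{-l^2/2}$, and checking that it cancels exactly against $q^{3l^2/2}$ to leave $q^{l^2}$. The manipulations are legitimate because each series converges absolutely for $|q|<1$. The identities hold for all rational $\nu$, since $q^{\pm\nu}$ only appears through $\overline{J}_{\frac12-\nu,1}$ and the quasi-periodicity shift is by the integer $l$.
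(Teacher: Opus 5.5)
Your proof is correct and follows essentially the paper's route. The paper also sums out $k$ with \eqref{Euler2}, then applies the rank-three identity \eqref{Z3-Ex2}, which is the case $\alpha=h=1$, $A_1=2$ of \eqref{Sq-start} and gives a theta function times a Rogers--Ramanujan sum; your Steps 2--4 derive that identity inline, summing the two inner variables separately by \eqref{Euler2} (their cross term vanishes when $\alpha=1$) and using Jacobi triple product quasi-periodicity in place of the Durfee rectangle identity \eqref{Durfee} and the shift of $n$, which makes the argument self-contained.
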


\begin{proof}
Summing over $k$ using \eqref{Euler2}, we have
\begin{align}\label{Dex6-L1-D}
F(u,v,w,t;q)=(-wq^{1/2};q)_{\infty}\sum_{i,j,l\geq 0}\frac{u^{i}v^{j}t^{l}q^{(i^2+j^2+3l^2)/2+il-jl}}{(q;q)_{i}(q;q)_{j}(q;q)_{l}}.    
\end{align}
Setting $\alpha=h=1$, $A_1=2$ and $B_1\in \{0,1\}$ in \eqref{Sq-start} (see also \cite[Theorem $4.3$]{Wang-rank3}), we obtain
\begin{align}\label{Z3-Ex2}
\sum_{i.j.k\geq 0}\frac{q^{(3i^2+j^2+k^2)/2+ij-ik+(\nu+B_{1})i+\nu j-\nu k+\nu^2/2}}{(q;q)_{i}(q;q)_{j}(q;q)_{k}} =\frac{J_{5}}{J_{1}J_{1+B_{1},5}}\sum_{n\in\mathbb{Z}+\nu}q^{n^2/2}.
\end{align}
Rewriting the variables $i,j,k$ as $\ell,i,j$, respectively, and then 
substituting the result into \eqref{Dex6-L1-D}, we obtain \eqref{Dex6-L1-D1} and \eqref{Dex6-L1-D2}.
\end{proof}

Finally, we give proofs for the modularity of the triples arising as lift-dual of the Milas--Wang example \eqref{MW-mt} through the lift $\mathcal{L}_3$.
\begin{proof}[Proof of Theorem \ref{thm-MW-L3}]
Summing over $i$ and $j$ using \eqref{Euler2}, we have
\begin{align}\label{Dex6-L3}
&F(u,v,w,t;q)=\sum_{k,l\geq 0}\frac{w^{k}t^{l}q^{2k^2+l^2+2kl}}{(q;q)_{k}(q;q)_{l}}(-uq^{\frac{1}{2}-(k+l)},-vq^{\frac{1}{2}+k};q)_{\infty} \nonumber\\ 
&=(-uq^{1/2},-vq^{1/2};q)_{\infty}\sum_{k,l\geq 0}\frac{u^{k+l}w^{k}t^{l}q^{k^2+(k+l)^2/2}(-q^{1/2}/u;q)_{k+l}}{(-vq^{1/2};q)_{k}(q;q)_{k}(q;q)_{l}} \nonumber\\ 
&=(-uq^{1/2},-vq^{1/2};q)_{\infty}\sum_{n=0}^{\infty}\sum_{k=0}^{n}\frac{(ut)^{n}(w/t)^{k}q^{k^2+n^2/2}(-q^{1/2}/u;q)_{n}}{(-vq^{1/2};q)_{k}(q;q)_{k}(q;q)_{n-k}}.
\end{align}

(1) Setting $u=v=w=t=1$ in \eqref{Dex6-L3}, we have
\begin{align}
&F(1,1,1,1;q)=(-q^{1/2};q)_{\infty}^{2}\sum_{n=0}^{\infty}\sum_{k=0}^{n}\frac{q^{k^2+n^2/2}(-q^{1/2};q)_{n}}{(-q^{1/2};q)_{k}(q;q)_{k}(q;q)_{n-k}}.
\end{align}    
Substituting the Bailey pair \eqref{H2} into \eqref{S1}, we obtain a new Bailey pair:
\begin{align}
\begin{split}
\alpha_{0}'(1;q)&=1,\quad \alpha_{n}'(1;q)=(-1)^nq^{2n^2}(q^{n/2}+q^{-n/2}),\\ 
\beta_{n}'(1;q)&=\sum_{k=0}^{n}\frac{q^{k^2}}{(-q^{1/2},q;q)_{k}(q;q)_{n-k}}.
\end{split}
\end{align}
Then substituting this Bailey pair into \eqref{S4*}, we deduce that
\begin{align}
&F(1,1,1,1;q)=(-q^{1/2};q)_{\infty}^{2}\sum_{n=0}^{\infty}q^{n^2/2}(-q^{1/2};q)_{n}\beta_{n}'(1;q) \nonumber\\ 
&=\frac{(-q^{1/2};q)_{\infty}^{3}}{(q;q)_{\infty}}\Big(1+\sum_{n=1}^{\infty}(-1)^{n}q^{5n^2/2}(q^{n/2}+q^{-n/2})\Big) \nonumber\\
&=\frac{(-q^{1/2};q)_{\infty}^{3}}{(q;q)_{\infty}}\sum_{n=-\infty}^{\infty}(-1)^{n}q^{(5n^2+n)/2}. 
\end{align}    
By \eqref{Jacobi} we obtain \eqref{thm-MW-id-1}.

(2) Setting $(u,v,w,t)=(q^{-1/2},q^{1/2},q^{2},q)$ in \eqref{Dex6-L3}, we have
\begin{align}
&F(q^{-1/2},q^{1/2},q^{2},q;q)=2(-q;q)_{\infty}^{2}\sum_{n=0}^{\infty}\sum_{k=0}^{n}\frac{q^{k^2+k+(n^2+n)/2}(-q;q)_{n}}{(q^2;q^2)_{k}(q;q)_{n-k}}.
\end{align}   
Substituting the Bailey pair \eqref{E4} into \eqref{S1}, we obtain a new Bailey pair:
\begin{align}
\begin{split}
&\alpha_{0}^{(1)}(1;q)=1,\\
&\alpha_{2n}^{(1)}(1;q)=q^{8n^2}(q^{2n}+q^{-2n}), \quad n\geq 1,\\
&\alpha_{2n+1}^{(1)}(1;q)=-q^{8n^2+6n+1}-q^{8n^2+10n+3}, \quad n\geq 0, \\ 
&\beta_{n}^{(1)}(1;q)=\sum_{k=0}^{n}\frac{q^{k^2+k}}{(q^2;q^2)_{k}(q;q)_{n-k}}.
\end{split}
\end{align}
Then substituting the Bailey pair $(\alpha_{n}^{(1)}(1;q),\beta_{n}^{(1)}(1;q))$ into \eqref{Lovejoy-a-aq-1}, we obtain the Bailey pair:
\begin{align}
\begin{split}
&\alpha_{0}^{(2)}(q;q)=1,\\
&\alpha_{2n}^{(2)}(q;q)=\frac{(1-q^{4n+1})q^{8n^2+2n}}{1-q}, \quad n\geq 1,\\
&\alpha_{2n+1}^{(2)}(q;q)=-\frac{(1-q^{4n+3})q^{8n^2+10n+3}}{1-q}, \quad n\geq 0,\\ 
&\beta_{n}^{(2)}(q;q)=\beta_{n}^{(1)}(1;q).
\end{split}
\end{align}
Finally, substituting this Bailey pair into \eqref{S2*}, we deduce that
\begin{align}
&F(q^{-1/2},q^{1/2},q^{2},q;q)=2(-q;q)_{\infty}^{2}\sum_{n=0}^{\infty}q^{(n^2+n)/2}(-q;q)_{n}\beta_{n}^{(2)}(q;q) \nonumber\\ 
&=\frac{2(-q;q)_{\infty}^{3}}{(q;q)_{\infty}}\Big(1-q+\sum_{n=1}^{\infty}(q^{10n^2+3n}-q^{10n^2+7n+1})-\sum_{n=1}^{\infty}(q^{10n^2-7n+1}-q^{10n^2-3n})\Big) \nonumber\\
&=\frac{2(-q;q)_{\infty}^{3}}{(q;q)_{\infty}}\Big(\sum_{n=-\infty}^{\infty}q^{10n^2-3n}-q\sum_{n=-\infty}^{\infty}q^{10n^2-7n}\Big). 
\end{align}    
By \eqref{Jacobi} we obtain \eqref{thm-MW-id-2}.

(3) Setting $(u,v,w,t)=(q^{-1},q,q^{2},q)$ in \eqref{Dex6-L3}, we have
\begin{align}
&F(q^{-1},q,q^{2},q;q)=q^{-1/2}(-q^{1/2};q)_{\infty}^{2}\sum_{n=0}^{\infty}\sum_{k=0}^{n}\frac{q^{k^2+k+n^2/2}(-q^{3/2};q)_{n}}{(-q^{3/2};q)_{k}(q;q)_{k}(q;q)_{n-k}}.
\end{align}   
Substituting the Bailey pair \eqref{BP-1} into \eqref{S1}, we obtain a new Bailey pair:
\begin{align}
\begin{split}
\alpha_{n}'(q;q)&=\frac{(1-q^{n+1/2})(-1)^{n}q^{2n^2+3n/2}}{1-q^{1/2}},\\ 
\beta_{n}'(q;q)&=\sum_{k=0}^{n}\frac{q^{k^2+k}}{(-q^{3/2},q;q)_{k}(q;q)_{n-k}}.
\end{split}
\end{align}   
Substituting this Bailey pair into \eqref{S4*}, we deduce that
\begin{align}
&F(q^{-1},q,q^{2},q;q)=q^{-1/2}(-q^{1/2};q)_{\infty}^{2}\sum_{n=0}^{\infty}q^{n^2/2}(-q^{3/2};q)_{n}\beta_{n}'(q;q) \nonumber\\ 
&=\frac{q^{-1/2}(-q^{1/2};q)_{\infty}^{3}}{(q;q)_{\infty}}\sum_{n=0}^{\infty}(1-q^{2n+1})(-1)^nq^{(5n^2+3n)/2} \nonumber\\
&=\frac{q^{-1/2}(-q^{1/2};q)_{\infty}^{3}}{(q;q)_{\infty}}\sum_{n=-\infty}^{\infty}(-1)^nq^{(5n^2+3n)/2}.
\end{align}    
By \eqref{Jacobi} we obtain \eqref{thm-MW-id-3}.

(4) Setting $(u,v,w,t)=(q^{1/2},q^{-1/2},1,1)$ in \eqref{Dex6-L3}, we have
\begin{align}
&F(q^{1/2},q^{-1/2},1,1;q)=2(-q;q)_{\infty}^{2}\sum_{n=0}^{\infty}\sum_{k=0}^{n}\frac{q^{k^2+(n^2+n)/2}(-1;q)_{n}}{(-1;q)_{k}(q;q)_{k}(q;q)_{n-k}}.
\end{align}    
Substituting the Bailey pair \eqref{BP-2} into \eqref{S1}, we obtain a new Bailey pair:
\begin{align}
\begin{split}\alpha_{0}'(1;q)&=1,\quad \alpha_{n}'(1;q)=\frac{(-1)^{n}}{2}(1+q^{n})^{2}q^{2n^2-n},\\ 
\beta_{n}'(1;q)&=\sum_{k=0}^{n}\frac{q^{k^2}}{(-1,q;q)_{k}(q;q)_{n-k}}.
\end{split}
\end{align}   
Substituting this Bailey pair into \eqref{S6*}, we deduce that
\begin{align}
&F(q^{1/2},q^{-1/2},1,1;q)=2(-q;q)_{\infty}^{2}\sum_{n=0}^{\infty}q^{(n^2+n)/2}(-1;q)_{n}\beta_{n}'(1;q) \nonumber\\ 
&=\frac{2(-q;q)_{\infty}^{3}}{(q;q)_{\infty}}\Big(1+\sum_{n=1}^{\infty}(1+q^{n})(-1)^nq^{(5n^2-n)/2}\Big) \nonumber\\
&=\frac{2(-q;q)_{\infty}^{3}}{(q;q)_{\infty}}\sum_{n=-\infty}^{\infty}(-1)^nq^{(5n^2-n)/2}.
\end{align}    
By \eqref{Jacobi} we obtain \eqref{thm-MW-id-4}.

(5) Setting $(u,v,w,t)=(q^{-1/2},q^{1/2},1,1)$ in \eqref{Dex6-L3}, we have
\begin{align}
&F(q^{-1/2},q^{1/2},1,1;q)=2(-q;q)_{\infty}^{2}\sum_{n=0}^{\infty}\sum_{k=0}^{n}\frac{q^{k^2+(n^2-n)/2}(-q;q)_{n}}{(q^2;q^2)_{k}(q;q)_{n-k}}.
\end{align}    
Substituting the Bailey pair \eqref{E1} into \eqref{S1}, we obtain a new Bailey pair:
\begin{align}
\begin{split}
\alpha_{0}'(1;q)&=1,\quad \alpha_{2n}'(1;q)=2q^{8n^2},\quad \alpha_{2n+1}'(1;q)=-2q^{8n^2+8n+2},\\ 
\beta_{n}'(1;q)&=\sum_{k=0}^{n}\frac{q^{k^2}}{(q^2;q^2)_{k}(q;q)_{n-k}}.
\end{split}
\end{align}   
Substituting this Bailey pair into \eqref{S5*}, we deduce that
\begin{align}
&F(q^{-1/2},q^{1/2},1,1;q)=2(-q;q)_{\infty}^{2}\sum_{n=0}^{\infty}q^{(n^2-n)/2}(-q;q)_{n}\beta_{n}'(1;q) \nonumber\\ 
&=\frac{4(-q;q)_{\infty}^{3}}{(q;q)_{\infty}}\Big(1+\sum_{n=1}^{\infty}(1+q^{2n})q^{10n^2-n}-\sum_{n=0}^{\infty}(1+q^{2n+1})q^{10n^2+9n+2}\Big) \nonumber\\
&=\frac{4(-q;q)_{\infty}^{3}}{(q;q)_{\infty}}\Big(\sum_{n=-\infty}^{\infty}q^{10n^2-n}-q^{2}\sum_{n=-\infty}^{\infty}q^{10n^2+9n}\Big). 
\end{align}    
By \eqref{Jacobi} we obtain \eqref{thm-MW-id-5}.

(6) Setting $(u,v,w,t)=(q^{-3/2},q^{3/2},q^{2},q)$ in \eqref{Dex6-L3}, we have
\begin{align}
&F(q^{-3/2},q^{3/2},q^{2},q;q)=2q^{-1}(-q;q)_{\infty}^{2}\sum_{n=0}^{\infty}\sum_{k=0}^{n}\frac{q^{k^2+k+(n^2-n)/2}(-q^2;q)_{n}}{(-q^2;q)_{k}(q;q)_{k}(q;q)_{n-k}}.
\end{align}    
Substituting the Bailey pair \eqref{BP-3} into \eqref{S1}, we obtain a new Bailey pair:
\begin{align}
\begin{split}
&\alpha_{0}^{(1)}(q;q)=1,\\
&\alpha_{n}^{(1)}(q;q)=\frac{2(1+q)(1-q^{2n+1})(-1)^{n}q^{2n^2+2n}}{(1-q)(1+q^{n})(1+q^{n+1})},\\ 
&\beta_{n}^{(1)}(q;q)=\sum_{k=0}^{n}\frac{q^{k^2+k}}{(-q^2,q;q)_{k}(q;q)_{n-k}}.
\end{split}
\end{align}
Substituting the Bailey pair $(\alpha_{n}^{(1)}(q;q),\beta_{n}^{(1)}(q;q))$ into \eqref{McLaughlin-a-a/q}, we obtain the Bailey pair:
\begin{align}
\begin{split}
&\alpha_{0}^{(2)}(1;q)=1,\\
&\alpha_{n}^{(2)}(1;q)=\frac{2(1+q)(-1)^{n}q^{2n^2-1}}{1+q^{n}}\Big(\frac{q^{2n+1}}{1+q^{n+1}}+\frac{1}{1+q^{n-1}}\Big),\\ 
&\beta_{n}^{(2)}(1;q)=\beta_{n}^{(1)}(q;q).
\end{split}
\end{align}

We have
\begin{align}\label{DEx6-L3-6}
&F(q^{-3/2},q^{3/2},q^{2},q;q)=2q^{-1}(-q;q)_{\infty}^{2}\sum_{n=0}^{\infty}q^{(n^2-n)/2}(-q^2;q)_{n}\beta_{n}^{(1)}(q;q) \nonumber\\ 
&=\frac{2q^{-1}(-q;q)_{\infty}^{2}}{1+q}\Big(\sum_{n=0}^{\infty}q^{(n^2-n)/2}(-q;q)_{n}\beta_{n}^{(2)}(1;q) \nonumber\\ 
&\quad \quad +\sum_{n=0}^{\infty}q^{1+(n^2+n)/2}(-q;q)_{n}\beta_{n}^{(1)}(q;q)\Big).
\end{align}

Substituting the Bailey pair $(\alpha_{n}^{(1)}(q;q),\beta_{n}^{(1)}(q;q))$ into \eqref{S2*}, we deduce that
\begin{align}\label{BP-3.1}
&\frac{q}{1+q}\sum_{n=0}^{\infty}q^{(n^2+n)/2}(-q;q)_{n}\beta_{n}^{(1)}(q;q) \nonumber\\
&=\frac{2q(-q;q)_{\infty}}{(q;q)_{\infty}}\sum_{n=0}^{\infty}\frac{(1-q^{2n+1})(-1)^nq^{(5n^2+5n)/2}}{(1+q^{n})(1+q^{n+1})} \nonumber\\
&=\frac{q(-q;q)_{\infty}}{(q;q)_{\infty}}\sum_{n=0}^{\infty}(-1)^nq^{(5n^2+5n)/2}\Big(\frac{1-q^{n+1}}{1+q^{n+1}}+\frac{1-q^{n}}{1+q^{n}}\Big) \nonumber\\ 
&=\frac{q(-q;q)_{\infty}}{(q;q)_{\infty}}\Big(\sum_{n=0}^{\infty}\frac{(-1)^nq^{(5n^2+5n)/2}}{1+q^{n+1}}-\sum_{n=0}^{\infty}\frac{(-1)^nq^{1+(5n^2+7n)/2}}{1+q^{n+1}} \nonumber\\
&\quad +\sum_{n=0}^{\infty}\frac{(-1)^nq^{(5n^2+5n)/2}}{1+q^{n}}-\sum_{n=0}^{\infty}\frac{(-1)^nq^{(5n^2+7n)/2}}{1+q^{n}}\Big).
\end{align}    
Substituting the Bailey pair $(\alpha_{n}^{(2)}(1;q),\beta_{n}^{(2)}(1;q))$ into \eqref{S5*}, we deduce that
\begin{align}\label{BP-3.2}
&\frac{1}{1+q}\sum_{n=0}^{\infty}q^{(n^2-n)/2}(-q;q)_{n}\beta_{n}^{(2)}(1;q) \nonumber\\
&=\frac{2(-q;q)_{\infty}}{(q;q)_{\infty}}\Big(\sum_{n=0}^{\infty}\frac{(-1)^nq^{(5n^2+3n)/2}}{1+q^{n+1}}+\sum_{n=1}^{\infty}\frac{(-1)^nq^{(5n^2-n)/2-1}}{1+q^{n-1}}\Big) \nonumber\\
&=\frac{2(-q;q)_{\infty}}{(q;q)_{\infty}}\Big(\sum_{n=0}^{\infty}\frac{(-1)^nq^{(5n^2+3n)/2}}{1+q^{n+1}}-\sum_{n=0}^{\infty}\frac{(-1)^nq^{(5n^2+9n)/2+1}}{1+q^{n}}\Big).
\end{align}

Finally, substituting \eqref{BP-3.1} and \eqref{BP-3.2} into \eqref{DEx6-L3-6}, we have
\begin{align}
&F(q^{-3/2},q^{3/2},q^{2},q;q) \nonumber\\ 
&=2q^{-1}\frac{(-q;q)_{\infty}^{3}}{(q;q)_{\infty}}\Big(\sum_{n=0}^{\infty}(-1)^nq^{(5n^2+3n)/2}+\sum_{n=0}^{\infty}(1-q^{n+1})(-1)^nq^{(5n^2+3n)/2} \nonumber\\
&\quad +\sum_{n=0}^{\infty}(1-q^{n})(-1)^nq^{1+(5n^2+5n)/2}-\sum_{n=0}^{\infty}(-1)^nq^{1+(5n^2+7n)/2}\Big) \nonumber\\
&=4q^{-1}\frac{(-q;q)_{\infty}^{3}}{(q;q)_{\infty}}\Big(\sum_{n=0}^{\infty}(-1)^nq^{(5n^2+3n)/2}-\sum_{n=0}^{\infty}(-1)^nq^{1+(5n^2+7n)/2}\Big) \nonumber\\
&=4q^{-1}\frac{(-q;q)_{\infty}^{3}}{(q;q)_{\infty}}\sum_{n=-\infty}^{\infty}(-1)^nq^{(5n^2+3n)/2}. 
\end{align}
By \eqref{Jacobi} we obtain \eqref{thm-MW-id-6}.
\end{proof}

\subsection*{Acknowledgements}
This work was supported by the National Key R\&D Program of China (Grant No.\ 2024YFA1014500).

\end{document}